\title[Derivatives at the Boundary]{Derivatives at the Boundary
for Analytic Lipschitz Functions}
\author{Anthony G. O'Farrell}
\email{anthony.ofarrell@nuim.ie}
\address{Mathematics and Statistics, NUI, Maynooth, Co. Kildare, Ireland}
\date{\today}
\dedicatory{}
\keywords{Analytic function, boundary, Lipschitz condition, point derivation, 
difference quotient, capacity, Hausdorff content}
\subjclass[2010]{30E25, 30H99, 46J10}
\def\Label{\label}
\newtheorem{theorem}{Theorem}[section]
\newtheorem{corollary}[theorem]{Corollary}
\newtheorem{lemma}[theorem]{Lemma}
\newcommand{\B}{\mathbb{B}}
\newcommand{\C}{\mathbb{C}}
\newcommand{\N}{\mathbb{N}}
\newcommand{\R}{\mathbb{R}}
\newcommand{\bdy}{\textup{bdy}}
\newcommand{\clos}{\textup{clos}}
\newcommand{\dist}{\textup{dist}}
\newcommand{\lip}{\textup{lip}}
\newcommand{\half}{\frac12}
\newcommand{\dsty}{\displaystyle}
\def\sideremark#1{\ifvmode\leavevmode\fi\vadjust{
\vbox to0pt{\hbox to 0pt{\hskip\hsize\hskip1em
\vbox{\hsize1cm\tiny\raggedright\pretolerance10000
\noindent #1\hfill}\hss}\vbox to8pt{\vfil}\vss}}}
\begin{document}
\begin{abstract}
We consider the behaviour of holomorphic functions 
on a bounded open subset of the plane,
satisfying a Lipschitz condition with exponent
$\alpha$, with $0<\alpha<1$, in the vicinity of
an exceptional boundary point where all such functions exhibit some
kind of smoothness. Specifically, we consider the
relation between the abstract idea of a bounded
point derivation on the algebra of
such functions and the classical complex derivative
evaluated as a limit of difference quotients.
We show that whenever such a bounded point derivation
exists at a boundary point $b$, 
it may be evaluated by taking a limit
of classical difference quotients, for approach
from a set having full area density at $b$.
\end{abstract}

\maketitle

\section{Introduction}
In a first paper \cite{bsaf} on the subject,
we confined attention
to the situation in which the boundary point
is nicely accessible from $U$.  
This paper reports on a continuation of the investigation
described in \cite{bsaf}, and we refer the reader to
that paper for background and notation.

We recall for emphasis that the main object of interest
is the algebra $A=A_\alpha(U)$ of $\lip\alpha$ 
(note the \lq\lq little-lip")
functions on the bounded open set $U\subset\C$,
and that $0<\alpha<1$. 

We note one necessary condition that follows from the theorem
of \cite{LO}: If
there exists a 
nonzero continuous
point derivation on $A=A_\alpha(U)$ at a given 
boundary point $b$,
then $U$ has full area density at $b$,
i.e.
$$ \lim_{r\downarrow0}\frac{\mathcal{L}^2(U\cap\mathbb{B}(b,r))}{\pi r^2}
=1,$$
where 
$\mathbb{B}(b,r)$ denotes the closed ball of radius $r$.  Here
we use the notation that $\mathcal{L}^d$ denotes 
$d$-dimensional Lebesgue measure.

We say that a sequence $(z_n)_n$ of points of $U$
{\em converges non-tangentia\-lly to }$b$, written
$z_n\to_\textup{nt} b$, if there exists a constant
$t>0$ such that 
$$\dist(z_n,\C\setminus U)\ge t|z_n-b|,\ \forall n.$$

Our main result in \cite{bsaf}\footnote{
There is a misprint in Equation (3), page 142 of this paper:
$\alpha$ should be $1-\alpha$.  Also, on page
144, line 5, it should say $R_a-L_a=(1-\hat{T}(a))R_a$.
With these corrected, the
argument is valid and the result stands.
} was the following:

\begin{theorem}
Let $0<\alpha<1$, let $U\subset\C$ be a bounded open set,
let $b\in\bdy(U)$, $z_n\in U$, $z_n\to_\textup{nt} b$.
Suppose $A=A_\alpha(U)$ admits a nonzero continuous point derivation
at $b$.  Let $\partial$ be the normalised
derivation at $b$. 
Then  for each $f\in A$, we have
$$ \frac{f(z_n)-f(b)}{z_n-b} \to \partial f.$$
\end{theorem}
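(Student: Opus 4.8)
The plan is to convert the difference quotient into the value of the derivation $\partial$ at an auxiliary function whose $\lip\alpha$-norm I can force to be small.

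First I would reduce to the case $f(b)=0$: replacing $f$ by $f-f(b)$ changes neither $\partial f$ nor $\bigl(f(z_n)-f(b)\bigr)/(z_n-b)$, so after this reduction the difference quotient is simply $L_n(f):=f(z_n)/(z_n-b)$ and the goal is $L_n(f)\to\partial f$. For $w\in U$ let $D_w f$ be the divided difference $z\mapsto\bigl(f(z)-f(w)\bigr)/(z-w)$. This is the restriction to $U$ of a function holomorphic on $U$ (the singularity at $w$ is removable), and it again lies in $A=A_\alpha(U)$: near $w$ it is given by a Cauchy integral, hence is smooth there, while off a neighbourhood of $w$ it is the product of the little‑Lipschitz function $f-f(w)$ with the smooth bounded function $(z-w)^{-1}$. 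Put
$$ h_w:=(w-b)\,D_w f-f(w)=\frac{(w-b)f(z)-(z-b)f(w)}{z-w}\in A .$$
Applying $\partial$ to the identity $(z-w)\cdot D_w f=f-f(w)$ in $A$, and using that $\partial$ kills constants, is normalised so that $\partial(z-w)=1$, and that $D_w f(b)=f(w)/(w-b)$ (here $f(b)=0$), Leibniz's rule gives $\partial(D_w f)=(b-w)^{-1}\bigl(\partial f-f(w)/(w-b)\bigr)$, whence
$$ \partial h_w=\frac{f(w)}{w-b}-\partial f=L_w(f)-\partial f .$$
Therefore $|L_n(f)-\partial f|=|\partial h_{z_n}|\le\|\partial\|\,\|h_{z_n}\|_{\lip\alpha}$, and it remains to prove $\|h_{z_n}\|_{\lip\alpha}\to0$.

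For that estimate I would set $r_n=|z_n-b|$, fix (by non‑tangentiality) a $t\in(0,1]$ with $\B(z_n,tr_n)\su U$ for all $n$, and use the modulus $\omega(s)=\sup\{|f(u)-f(v)|/|u-v|^\alpha:u,v\in\clos U,\ 0<|u-v|\le s\}$, which tends to $0$ as $s\downarrow0$ because $f$ is \emph{little}‑Lipschitz, together with the consequence $|f(z)|\le\omega(|z-b|)\,|z-b|^\alpha$ of $f(b)=0$. I would then estimate $h_{z_n}$ on the three regions cut out by the discs $\B(z_n,tr_n/2)$ and $\B(b,100r_n)$: on $\B(z_n,tr_n/2)$, where $h_{z_n}$ is holomorphic, Cauchy's estimates bound $\sup|h_{z_n}|$ and $\sup|h_{z_n}'|$ by $C(t)\,\omega(Cr_n)\,r_n^{\alpha}$ and $C(t)\,\omega(Cr_n)\,r_n^{\alpha-1}$; on the rest of $\B(b,100r_n)$, where $|z-w|\sim r_n$ and $|f(z)-f(w)|\lesssim\omega(Cr_n)r_n^\alpha$; and on $\clos U\setminus\B(b,100r_n)$, where the identity
$$ D_w f(z)-D_w f(z')=\frac{(f(z)-f(w))(z'-z)}{(z-w)(z'-w)}+\frac{f(z)-f(z')}{z'-w} $$
controls both the sup norm and the $\alpha$‑Hölder ratio of $D_w f$. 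In every region the factor $w-b$, of size $r_n$, cancels the negative powers of $r_n$ coming from Cauchy's estimates and from $(z-w)^{-1}$, and what survives is a quantity of order $\omega(Cr_n)\to0$; the finitely many mixed estimates, for pairs of points in different regions, follow from the triangle inequality and the bounds already in hand. This yields $\|h_{z_n}\|_{\lip\alpha}\to0$ and hence the theorem.

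The hard part is this last norm estimate. The divided difference $D_{z_n}f$ by itself has $\lip\alpha$-norm of order $\omega(r_n)r_n^{\alpha-1}\to\infty$, and the whole content is that after multiplying by $z_n-b$ and subtracting the constant $f(z_n)$ one obtains a function $h_{z_n}$ whose $\lip\alpha$-norm nonetheless tends to $0$. This is precisely where both hypotheses are indispensable: non‑tangential approach is what puts a definite disc $\B(z_n,tr_n)\su U$ around $z_n$, so that $h_{z_n}$ is holomorphic there and can be estimated, together with its derivative, by Cauchy's formula; and the \emph{little}‑Lipschitz hypothesis supplies the decaying factor $\omega(r_n)\to0$ needed to beat the divergent power $r_n^{\alpha-1}$.
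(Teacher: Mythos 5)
Your proposal is correct in substance, but it takes a genuinely different route from the one this paper (and its predecessor \cite{bsaf}, whose argument is what the statement's footnote corrects) uses. The paper's method is dual: it represents the normalised derivation by a distribution $T_1=L(\mu)\in(\lip\alpha)^*$ built from a measure on $X\times X$, forms $T_0=(z-b)T_1$, $T=-\pi(z-b)^2T_1$, the evaluation functionals $R_a$ and the differences $D_a=\frac{R_a-T_0}{a-b}-T_1$, and then proves convergence of the difference quotients by pointwise estimates on Cauchy transforms such as $\widehat{T_1}(z_n)$ and $\widehat{g\cdot T_1}(z_n)$ (non-tangentiality being used to control the transforms at $z_n$), finishing by convergence on the dense subalgebra $\mathcal{A}$ plus uniform boundedness of the $D_{z_n}$. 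You instead make an algebraic reduction: with $f(b)=0$, applying the Leibniz rule to $(z-w)\cdot D_wf=f-f(w)$ gives exactly $\frac{f(w)}{w-b}-\partial f=(w-b)\,\partial(D_wf)=\partial h_w$ (your computation is correct, and $D_wf$ does lie in $A$), so the theorem reduces to the single function-theoretic estimate $\|h_{z_n}\|_{\lip\alpha}\to0$, with no representing measures, Cauchy transforms or potential theory at all. That is more elementary and isolates cleanly where non-tangentiality (Cauchy estimates on $\B(z_n,t r_n)\subset U$) and the little-o modulus enter; what the paper's heavier machinery buys is that it survives when no non-tangential disc exists, which is exactly what is needed for the full-density Theorem \ref{T:main} of this paper, whereas your Cauchy-estimate step genuinely requires the disc.

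One caution on the key estimate, which you rightly call the hard part: the assertion that in every region ``what survives is a quantity of order $\omega(Cr_n)$'' is not accurate away from $b$. For pairs $z,z'$ both far from $w$, bounding the second term of your identity by $\omega(|z-z'|)|z-z'|^\alpha/|z'-w|$ and dividing by $|z-z'|^\alpha$ leaves, after multiplication by $r_n$, a quantity of the form $r_n\,\omega(\delta)/|z'-w|$ with $\delta=|z-z'|$; when $\delta$ is of unit size and $|z'-w|$ is comparable to $r_n$ this is only $O(\kappa(f))$, so one must there split $f(z)-f(z')$ through $f(w)$ (or pass through the sup norm of $h_{z_n}$, which is $O(\kappa(f)r_n^\alpha)$), and at intermediate scales the surviving bound is of the shape $\sup_\delta\min\{\omega(\delta),\,\kappa(f)\,r_n/\delta\}$, which tends to zero by the little-Lipschitz property but not at rate $\omega(Cr_n)$. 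With that bookkeeping repaired, all regions and mixed pairs do close, and your argument gives a complete proof.
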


Now we consider the general situation, in which there
may not exist any sequence from
$U$ that approaches $b$ nontangentially.
Without going into details
about Hausdorff content, we remark that for a closed ball
$M_*^\beta(\B(a,r))=r^{\beta}$,
and also that $M_*^\beta$
is countably subadditive. As a result, it is easy to construct
many examples $U$ in which the complement of $U$ is a countable union
of  closed balls, and the singleton $\{b\}$, and in which
$A$ has a continuous point derivation at $b$,
even though there is no sequence $z_n\in U$
approaching $b$ nontangentially.  
All you have to do is make sure that the sum of
the $(1+\alpha)$-th powers of
the radii of all the closed balls that meet
$A_n(b)$ is no greater than $s_n/4^n$, where $\sum_ns_n<+\infty$.

Here is our main result:

\begin{theorem}\Label{T:main}
Let $0<\alpha<1$, let $U\subset\C$ be a bounded open set,
and let $b\in\bdy(U)$.
Suppose $A=A_\alpha(U)$ admits a nonzero continuous point derivation
at $b$.  Let $\partial$ be the normalised
derivation at $b$. Then there exists a set $E\subset U$, 
having full area density at $b$, such that
for each $f\in A$, we have
$$ \lim_{E\ni z\to a}\frac{f(z)-f(b)}{z-b} = \partial f.$$
\end{theorem}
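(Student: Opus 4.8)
The plan is to pass to representing functionals and to reduce the theorem to the decay, along a suitable full-density set, of a single dual norm. Since $\partial$ is a bounded point derivation at $b$, $\lip\alpha$-duality (via the Cauchy transform) realises $\partial$ on $A$ as $\partial f=-\tfrac1\pi\langle\bar\partial\tilde f,\,G\rangle$, where $\tilde f$ is a compactly supported $\lip\alpha$ extension of $f$ (so that $\bar\partial\tilde f$ is a distribution carried on $\C\setminus U$, of norm $\lesssim\|f\|_A$ in the $\lip\alpha$-capacity sense) and $G$ is a fixed function, analytic off a compact subset of $\C\setminus U$ near $b$, agreeing with $(w-b)^{-2}$ to the order that matters; the word ``normalised'' fixes the constant, so that $\partial$ sends the coordinate function $w\mapsto w$ to $1$. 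The bounded functional $S_z\colon f\mapsto\frac{f(z)-f(b)}{z-b}$, for $z\in U\setminus\{b\}$, is represented in the same way with $G$ replaced by $\bigl((w-z)(w-b)\bigr)^{-1}$. Hence the functional of interest,
$$T_z\colon f\longmapsto\frac{f(z)-f(b)}{z-b}-\partial f ,$$
is represented by the kernel
$$K_z(w)\;=\;\frac{1}{(w-z)(w-b)}-\frac{1}{(w-b)^{2}}\;=\;\frac{z-b}{(w-z)\,(w-b)^{2}}$$
(up to the harmless analytic correction), carried on $\C\setminus U$. It therefore suffices to show that $\|T_z\|_{A^{*}}$, which is comparable to the $\lip\alpha$-capacity--dual norm of $K_z$ on $\C\setminus U$, tends to $0$ as $z\to b$ through a set $E$ of full area density at $b$; note the decisive factor $z-b$ in the numerator of $K_z$.

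The core estimate is the bound on that dual norm. Write $A_n(b)$ for the dyadic annuli about $b$ (so $|w-b|\approx 2^{-n}$ on $A_n(b)$), put
$$\beta_n\;:=\;4^{\,n}\,M_*^{1+\alpha}\bigl((\C\setminus U)\cap A_n(b)\bigr) ,$$
and recall that the existence of a bounded point derivation at $b$ forces $\sum_n\beta_n<\infty$ --- a strengthening of the full area density of \cite{LO}, and the quantitative fact underlying the examples discussed in the Introduction; in particular $\beta_n\to0$. Given $z$, let $m_0$ be the index with $2^{-m_0-1}\le|z-b|\le 2^{-m_0}$ and set $\delta:=\dist(z,\C\setminus U)$. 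On $A_n(b)$ the kernel $K_z$ is analytic away from $w=z$, and its contribution to the dual norm is controlled --- by a Vitushkin/Hausdorff-content estimate of the kind used in \cite{bsaf} --- by $M_*^{1+\alpha}\bigl((\C\setminus U)\cap A_n(b)\bigr)=\beta_n 4^{-n}$ times the size of $K_z$ and of $\nabla K_z$ on that annulus. For $n\ne m_0$ this yields, roughly, a bound $\lesssim|z-b|\cdot\dfrac{\beta_n 4^{-n}}{\dist(z,A_n(b))\,(2^{-n})^{2}}$; summing over $n<m_0$, where $\dist(z,A_n(b))\approx 2^{-n}$, contributes $\lesssim 2^{-m_0}\sum_{n<m_0}\beta_n 2^{\,n}$, which tends to $0$ because $\beta_n\to0$, while summing over $n>m_0$, where $\dist(z,A_n(b))\approx 2^{-m_0}$, contributes $\lesssim\sum_{n>m_0}\beta_n$, which tends to $0$ because $\sum_n\beta_n<\infty$. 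The only delicate annulus is $A_{m_0}(b)$, where the singularity of $K_z$ at $z$ is felt: there the contribution is of order $\beta_{m_0}$ times a negative power of $\delta/|z-b|$, hence small provided $\delta$ is not too small compared with the local scale of the obstruction.

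I would then construct $E$ by deleting from $U$ a collar of $\C\setminus U$ whose width, around an obstruction of content $c$ sitting at distance $\approx 2^{-n}$ from $b$, is a slowly growing multiple of $c\cdot 2^{\,n}$. Concretely, fix a near-optimal $M_*^{1+\alpha}$-cover of $\C\setminus U$ near $b$ by closed balls $\B(c_i,r_i)$, organised annulus by annulus so that $\sum_{c_i\in A_n(b)}r_i^{1+\alpha}\le 2\beta_n 4^{-n}$, and delete $\B\bigl(c_i,\,\omega_n\, r_i^{1+\alpha}\,2^{\,n}\bigr)$, where $n=n(i)$ is the index with $c_i\in A_n(b)$ and $\omega_n\uparrow\infty$ is chosen slowly enough that $\omega_n^{2}\beta_n\to0$. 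The area deleted from $A_n(b)$ is then $\lesssim\sum_i\bigl(\omega_n r_i^{1+\alpha}2^{\,n}\bigr)^{2}$, and using $\sum_i r_i^{1+\alpha}\le 2\beta_n 4^{-n}$, $r_i\lesssim 2^{-n}$ and $\omega_n^{2}\beta_n\to0$ one checks that this is $\oh(4^{-n})$ --- the worst configuration, a few well-separated obstruction balls, reproduces the model estimate $\omega_n^{2}\beta_n^{2}\to0$, while clustered configurations are more favourable because the obstruction is then effectively weaker. Hence $\mathcal{L}^{2}\bigl((U\setminus E)\cap\B(b,r)\bigr)=\oh(r^{2})$ and $E$ has full area density at $b$. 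On the other hand, for $z\in E$ the nearest point of $\C\setminus U$ lies outside the deleted collar, so $\delta$ exceeds $\omega_{m_0}$ times the local obstruction scale, and the delicate $A_{m_0}(b)$-term of the core estimate is $\lesssim\omega_{m_0}^{-1}\to0$. Combining the three regimes gives $\|T_z\|_{A^{*}}\to0$ as $E\ni z\to b$; therefore $\frac{f(z)-f(b)}{z-b}\to\partial f$ for every $f\in A$ (in fact uniformly for $\|f\|_A\le 1$), which is the theorem.

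The step I expect to be the main obstacle is the core estimate of the second paragraph: a Vitushkin/Hausdorff-content estimate showing that the dual norm of $K_z$ on $\C\setminus U$ is controlled, annulus by annulus, by the \emph{content} --- not the area --- of the obstruction there, degrading only mildly (by a harmless negative power of $\delta/|z-b|$) as $z$ is allowed to approach $\C\setminus U$. It is exactly here that the strengthened hypothesis $\sum_n\beta_n<\infty$, rather than mere full area density, enters, and it is what lets the collar defining $E$ be simultaneously thin enough for $E$ to have full area density and wide enough for the difference quotients on $E$ to converge. Reconciling those two requirements is the crux of the proof.
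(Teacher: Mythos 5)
Your outline is a transplant of the Hallstrom/Vitushkin scheme from the uniform-algebra setting, and it rests on two load-bearing claims that are not established and are not available in this setting. First, you assert that the existence of a bounded point derivation at $b$ \emph{forces} $\sum_n 4^{n}M_*^{1+\alpha}\bigl((\C\setminus U)\cap A_n(b)\bigr)<\infty$, calling it ``the quantitative fact underlying the examples in the Introduction''. That is backwards: the Introduction uses only the \emph{sufficiency} direction (convergence of such a series lets one build examples possessing a derivation). The necessity you need is a $\lip\alpha$ analogue of Hallstrom's converse, which is neither proved by you nor cited, and the paper's argument is structured precisely so as not to need it: the only input used is the existence of a representing functional $T_1=L(\mu)$, given by a measure $\mu$ on $X\times X$ paired against difference quotients $\bigl(f(z)-f(w)\bigr)/|z-w|^\alpha$. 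Second, your representation step is not well defined as stated: $\bar\partial\tilde f$ is a distribution (not a measure) supported on $\C\setminus U$ with $b$ in its support, so pairings with kernels such as $(w-b)^{-2}$ or $K_z(w)=\dfrac{z-b}{(w-z)(w-b)^2}$, which are singular exactly at $b$, have no a priori meaning, and ``the $\lip\alpha$-capacity--dual norm of $K_z$ on $\C\setminus U$'' is not a defined quantity. Making sense of and estimating such objects is exactly the difficulty the paper's machinery (the $L(\mu)$ construction, the distributions $T_0=(z-b)T_1$, $T=-\pi(z-b)^2T_1$, $R_a$, and the new double-layer Riesz potential lemmas that produce full-density sets on which $|\widehat{T_1}|$ and $|\widehat{g\cdot T_1}|$ are controlled) was built to overcome.

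Beyond these foundational issues, the ``core estimate'' --- which you yourself flag as the main obstacle --- is left unproved, and its stated form is too weak for your own construction: bounding the contribution of the annulus containing $z$ by $\beta_{m_0}$ times a negative power of $\dist(z,\C\setminus U)/|z-b|$ cannot work with a \emph{global} distance, since a piece of $\C\setminus U$ of negligible content sitting very close to $z$ makes $\dist(z,\C\setminus U)$ tiny without making the functional large; your collar construction (deleting $\B(c_i,\omega_n r_i^{1+\alpha}2^{n})$) implicitly requires a per-ball estimate with each ball's own content against its own distance to $z$, which is a different and harder statement than the one you wrote. Note also that you are aiming at $\|T_z\|_{A^*}\to0$, i.e.\ convergence uniform over the unit ball of $A$, which is strictly stronger than the theorem; the paper proves only a uniform bound $|D_a(f)|\le(K+\pi)\|f\|_\alpha$ for $a\in E$ together with $D_a f\to0$ on the dense subalgebra $\mathcal{A}$ of functions holomorphic near $b$, and concludes pointwise convergence on $A$ by density. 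As it stands, your proposal is a programme whose two key inputs (the Hallstrom-type necessity and the content-based dual-norm estimate for kernels singular at $b$) are missing, so it does not constitute a proof.
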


As with the earlier theorem, 
we would expect that this result could be extended
to higher order derivations and to weak-star
continuous derivations on the big Lip algebra.


This result allows us to give a number of conditions
equivalent to the existence of a bounded point derivation
at $b$:

\begin{corollary}\Label{C:main}
Let $0<\alpha<1$, let $U\subset\C$ be a bounded open set,
and let $b\in\bdy(U)$.
Then the following four conditions are equivalent:
\\
(1) 
$A=A_\alpha(U)$ admits a nonzero continuous point derivation
at $b$.  
\\
(2) There exist $a_n\in U$, $a_n\to b$, such that
$\dsty
\frac{f(a_n)-f(b)}{a_n-b}$ 
is bounded for each $f\in A$.
\\
(3) There exist $a_n\in U$, $a_n\to b$, such that
$\dsty
\frac{f(a_n)-f(b)}{a_n-b}$ 
converges for each $f\in A$.
\\
(4) There exists $E\subset U$ having full area density at $b$
such that 
\\$\dsty
\lim_{E\ni a\to b}\frac{f(a)-f(b)}{a-b}$ 
exists for each $f\in A$.
\end{corollary}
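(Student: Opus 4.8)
The plan is to establish the four conditions cyclically, in the order $(1)\Rightarrow(4)\Rightarrow(3)\Rightarrow(2)\Rightarrow(1)$, so that all of the analytic content is absorbed into Theorem~\ref{T:main} and the remaining links are soft.

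For $(1)\Rightarrow(4)$ there is nothing to do: this is precisely Theorem~\ref{T:main}. I would only pause to note that a subset $E\subseteq U$ of full area density at $b$ satisfies $\mathcal{L}^2(E\cap\B(b,r))\ge\tfrac12\pi r^2>0$ for all small $r$, hence accumulates at $b$; so $b\in\clos(E)$ and the limit in $(4)$ is taken over a genuinely non-empty filter, its value being $\partial f$. For $(4)\Rightarrow(3)$ I would simply choose, using that same remark, a sequence $a_n\in E$ with $a_n\to b$; then for every $f\in A$ the scalars $\frac{f(a_n)-f(b)}{a_n-b}$ converge, to the value of the limit in $(4)$. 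And $(3)\Rightarrow(2)$ is immediate, a convergent sequence of scalars being bounded.

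The one implication carrying any argument is $(2)\Rightarrow(1)$, and here I would run the standard ``difference-quotient functionals cluster at a point derivation'' scheme. For each $n$ put $\delta_n(f):=\frac{f(a_n)-f(b)}{a_n-b}$; this is a bounded linear functional on $A$ for each fixed $n$, since $|f(a_n)-f(b)|\le\|f\|_A\,|a_n-b|^{\alpha}$ forces $\|\delta_n\|\le|a_n-b|^{\alpha-1}$. Hypothesis $(2)$ says $(\delta_n(f))_n$ is bounded for every $f\in A$, so the uniform boundedness principle gives $M:=\sup_n\|\delta_n\|_{A^{*}}<\infty$; by Banach--Alaoglu the bounded sequence $(\delta_n)$ then has a weak-$*$ cluster point $D\in A^{*}$, with $\|D\|\le M$. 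Passing to the limit along the associated subnet in the identity $\delta_n(fg)=g(a_n)\,\delta_n(f)+f(b)\,\delta_n(g)$ (valid for all $f,g\in A$ by the usual splitting $f(a_n)g(a_n)-f(b)g(b)=[f(a_n)-f(b)]g(a_n)+f(b)[g(a_n)-g(b)]$), and using $g(a_n)\to g(b)$ (continuity of $g$ at $b$, $a_n\to b$) together with the boundedness of $\delta_n(f)$, I obtain $D(fg)=f(b)D(g)+g(b)D(f)$; also $D(1)=\lim_n\delta_n(1)=0$. Thus $D$ is a continuous point derivation at $b$.

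What remains --- and this is the only point in the corollary where one could go wrong --- is to see that $D$ is \emph{nonzero}. For that I would test against the coordinate function $u(z):=z-b$, which lies in $A=A_\alpha(U)$ because it is holomorphic and Lipschitz, hence in $\lip\alpha$, on the bounded set $U$; since $\delta_n(u)=1$ for every $n$ we get $D(u)=1\neq0$. This yields a nonzero continuous point derivation at $b$, which is $(1)$, and closes the cycle. The genuine obstacle of the paper lies entirely in Theorem~\ref{T:main}; the corollary itself presents none beyond the two routine functional-analytic inputs (uniform boundedness and weak-$*$ compactness) and this non-triviality check.
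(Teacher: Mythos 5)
Your proof is correct, and on the implications $(1)\Rightarrow(4)\Rightarrow(3)\Rightarrow(2)$ it coincides with the paper's (Theorem~\ref{T:main} plus the routine remarks you make). Where you genuinely diverge is in $(2)\Rightarrow(1)$. The paper, after invoking Banach--Steinhaus exactly as you do to get $|f(a_n)-f(b)|/|a_n-b|\le M\|f\|_A$, exploits the density in $A$ of functions that extend holomorphically to a neighbourhood of $b$ (the approximation fact from \cite{LO} recorded in Section~\ref{S:ext+cor}): for such $f$ the difference quotients along $(a_n)$ tend to $f'(b)$, so $|f'(b)|\le M\|f\|_A$, and $f\mapsto f'(b)$ extends by density to a continuous functional $\partial$ on $A$, which is a point derivation at $b$ (nonzero because $\partial(z-b)=1$ --- a point the paper leaves implicit and you check explicitly). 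You instead take a weak-star cluster point $D$ of the difference-quotient functionals via Alaoglu and verify the Leibniz identity directly in the limit, using $g(a_n)\to g(b)$ together with the pointwise boundedness of $\delta_n(f)$; your nonvanishing test on $u(z)=z-b$ (which does lie in $\lip\alpha$ on the bounded set $U$, since $|z-w|\le\diam(U)^{1-\alpha}|z-w|^\alpha$ with the ratio tending to $0$) closes the argument. Both routes are sound: the paper's is shorter but rests on the nontrivial density theorem of \cite{LO} and yields a canonical $\partial$ extending $f\mapsto f'(b)$, whereas yours avoids that approximation result entirely at the cost of a compactness/subnet argument and an a priori non-canonical limit functional --- harmless here, since condition (1) only asks for existence of some nonzero continuous point derivation.
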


In the paper \cite{bsaf} we introduced a number
of new ideas that allow the use of arguments
similar to those used in the theory of uniform 
algebras in this present context of Lipschitz algebras
with exponent less than $1$.  Essentially, the
main idea is that $\lip\alpha$, for $0<\alpha<1$
is \lq\lq sufficiently close" to $C^0$, so that
the elements of its dual space, although they are 
not measures, are \lq\lq sufficiently similar" 
to measures so that methods that work for measures 
can also be made to work for them.  
We are going to use these same ideas, supplemented
by some observations drawn from potential
theory, to prove our main theorem.

The paper is organised as follows: 
In Section \ref{S:ext+cor} we make a few preliminary remarks and
establish that Corollary \ref{C:main} will follow from
the main theorem.
In Section \ref{S:tools-PT}, we establish a few lemmas
from potential theory. These real-variable results about
Riesz potentials hold not just in two dimensions,
and are presented here in the context of $d$-dimensional
Euclidean space.
In section \ref{S:tools-CT} we recall the tools involving
representing measures and the Cauchy transform that
were introduced in \cite{bsaf}, and 
we apply the lemmas from potential theory
to derive estimates for Cauchy transforms. In
Section \ref{S:proof} we complete the proof of the 
main theorem.

\section{Preliminaries}\Label{S:ext+cor}

\subsection{Standing Conventions in this Paper}\Label{SS:conventions}
Throughout the paper, $0<\alpha<1$, $U\subset\C$ is a bounded
open set, $A=A_\alpha(U)$, $Y=\clos(U)$, and $b\in X=\bdy(U)$.
By \emph{measure} we shall mean a Borel-regular
complex measure $\mu$ such that its total-variation
measure is a Radon measure, i.e. assigns finite measure to
each compact set.  

\subsection{Extensions}
Let $\lip\alpha$ denote, for short, the global space
$\lip(\alpha,\C)$ of bounded $\lip\alpha$
functions on $\C$.  This space forms a Banach algebra
under pointwise operations. Each $f\in A$ may be extended
(in many different ways) to an element of 
$\lip\alpha$, without increasing its pure
seminorm $\|f\|'=\kappa(f)$ or supremum (cf. \cite{bsaf}). 
Thus the restriction
map to $U$ (or $Y$, or $X$) makes $A$ isometric
to a quotient algebra of 
$$ \tilde A = \{ f\in \lip\alpha: 
f \textup{ is holomorphic on }U\}.$$
We shall find it convenient to work
with globally-defined functions in the sequel.

Let us denote
$$ \mathcal{A}:= \{f\in\tilde A: f\textup{ is holomorphic near }b\}.$$
Then \cite{LO} $\mathcal{A}$ is a dense subalgebra of $\tilde A$,
and the restriction algebra $\mathcal{A}|U$ is a dense subalgebra
of $A$. As a consequence, the set of functions 
$f\in A$ that extend holomorphically to a neighbourhood of $b$
is dense in $A$.

\subsection{Proof of Corollary \ref{C:main}}
\begin{proof}
Theorem \ref{T:main} tells us that condition (1) in the
corollary implies condition (4). It is obvious that
(4)$\implies$(3)$\implies$(2), so we just have to see that
(2)$\implies$(1).

Suppose (2) holds. Then by the Banach-Steinhaus Theorem
there is a constant $M>0$ such that
$$
\left|\frac{f(a_n)-f(b)}{a_n-b}\right| \le M\|f\|_A$$ 
for each $f\in A$.  Hence if $f\in A$ extends holomorphically
to a neighbourhood of $b$, we have $|f'(b)|\le M\|f\|_A$.
Since the set of such functions is dense in $A$,
it follows that the functional $f\mapsto f'(b)$
has a unique continuous extension $\partial:A\to\C$,
and clearly $\partial$ is a point derivation at $b$. 
\end{proof}

\section{Tools from Potential Theory}\Label{S:tools-PT}
\subsection{The Riesz Capacities}
Fix $d\in\N$.  Let $0<s<d$. For compact $K\subset\R^d$ we define
$C_s(K)$, the {\em order $s$ Riesz capacity of the set $K$}, to be
the supremum of the total variations $\|\mu\|$, where $\mu$ 
ranges over all positive Radon measures that are supported on $K$
and have $\displaystyle \frac1{|x|^s}*\mu\le1$ on $\R^d$, i.e.
the Riesz potential
$$ \int_K \frac{d\mu(x)}{|x-y|^s} \le 1,\ \forall y\in\R^d.$$
For general sets $E\subset\R^d$, we then define
$C_s(E)$ to be the inner capacity
$$ \sup_{K\subset E, K\textup{ compact}}
C_s(K).$$
For more about these capacities, see \cite{Fed,Miz}.
We note from \cite{Fed}, 
for future use, the following facts about $C_s$:
\begin{enumerate}
\item $C_s(\mathbb{B}(a,r))\le r^s$ for all $a\in\R^d$ and $r>0$.
\item There is a constant $c$, depending only on $d$, such that
\\
$C_s(\mathbb{B}(a,r))\ge c\cdot r^s$ for all $a\in\R^d$ and $r>0$.
\item More generally, 
$C_s(E)\ge c\cdot \mathcal{L}^d(E)^{\frac sd}$ for all $E\subset\R^d$.
\item $C_s$ is countably subadditive.
\end{enumerate}

\subsection{Estimate for the potential}
We need a couple of estimates involving these capacities
and potentials.  The first is the following:

\begin{lemma}\Label{L:Riesz-1} Let $b\in\R^d$, $\mu$ be a positive Borel-regular
measure on $\R^d$
having compact support and no mass at $b$, let $0<s<d$, $\epsilon>0$,
and
$$ E = \left\{ a\in\R^d: |a-b|^s\cdot \int_{\R^d}
\frac{d\mu(x)}{|x-a|^s} \ge \epsilon\right\}.$$
Then
$$ \sum_{n=1}^\infty 2^{sn}C_s(A_n(b)\cap E) <+\infty.$$
\end{lemma}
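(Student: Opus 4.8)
Here $A_n(b)$ presumably denotes the dyadic annulus $\{a : 2^{-n} \le |a-b| < 2^{-n+1}\}$ (consistent with the earlier mention of "the balls that meet $A_n(b)$"). So on $A_n(b) \cap E$ we have $|a-b|^s \asymp 2^{-sn}$, whence $a \in A_n(b) \cap E$ forces the potential to be large:
$$
\int_{\R^d} \frac{d\mu(x)}{|x-a|^s} \ge \frac{\epsilon}{|a-b|^s} \ge \epsilon\, 2^{s(n-1)}.
$$
The plan is to split the integral according to whether the source point $x$ lies close to the annulus $A_n(b)$ — say in the enlarged shell $S_n := \{x : 2^{-n-1} \le |x-b| < 2^{-n+2}\}$ — or far from it. For the far part, $|x-a| \gtrsim |a-b| \asymp 2^{-n}$ when $x$ is in a coarser annulus, and $|x-a| \gtrsim |x-b|$ when $x$ is in a finer one; a routine geometric-series estimate in the annulus index shows the far contribution is $\Oh(2^{sn}\|\mu\|_{\text{nearby shells}})$ with geometrically decaying weights, which is not by itself enough — the dangerous case is genuinely when $x$ is near $a$. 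So the real point is: for $a \in A_n(b)\cap E$, the \emph{local} potential $\int_{S_n} |x-a|^{-s}\, d\mu(x)$ is $\gtrsim \epsilon\, 2^{sn}$, with the implied constant depending on $d,s$ after absorbing the finitely many neighbouring shells' far-contributions into a summable tail (this uses that $\mu$ has no atom at $b$, so $\mu(\B(b,r)) \to 0$ and the total far-contribution summed over $n$ is controlled by $\|\mu\|$).

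**From a lower bound on a potential to a capacity bound.** Once we know that every point of $F_n := A_n(b)\cap E$ carries a local Riesz potential of $\mu\!\restriction\!S_n$ that is $\ge \delta\, 2^{sn}$ (with $\delta \asymp \epsilon$), we invoke the standard weak-type / Chebyshev estimate for capacity: if $\nu$ is a positive measure and $F \subset \{y : \int |x-y|^{-s} d\nu(x) \ge \lambda\}$, then $C_s(F) \le \lambda^{-1}\|\nu\|$ — this follows immediately from the definition of $C_s$ by testing against an admissible measure supported on a compact subset of $F$ and applying Fubini. Hence
$$
C_s(F_n) \le \frac{\|\mu\!\restriction\!S_n\|}{\delta\, 2^{sn}} = \frac{\mu(S_n)}{\delta\, 2^{sn}},
$$
so that $2^{sn} C_s(F_n) \le \delta^{-1}\mu(S_n)$. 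Summing over $n$: since each $x \ne b$ lies in at most a bounded number (here $3$ or $4$) of the shells $S_n$, we get $\sum_n \mu(S_n) \le 4\,\mu(\R^d\setminus\{b\}) = 4\|\mu\| < \infty$, and the series converges.

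**Handling the interface terms cleanly.** The one genuinely fiddly point is the bookkeeping in the first paragraph: a priori $E$ could contain points $a \in A_n(b)$ whose large potential is entirely due to mass of $\mu$ sitting in a \emph{distant} annulus. I would handle this by the standard device of splitting $\epsilon$ in half: either (i) the near part $\int_{S_n}|x-a|^{-s}d\mu \ge \tfrac{\epsilon}{2}|a-b|^{-s}$, which lands $a$ in the set treated above, or (ii) the far part $\int_{\R^d\setminus S_n}|x-a|^{-s}d\mu \ge \tfrac{\epsilon}{2}|a-b|^{-s}$. For case (ii) one shows the far part is pointwise $\le C\sum_{k}2^{-|n-k|\theta}2^{sk}\mu(A_k'(b))$ for some $\theta>0$ (a genuine decay because for $x$ in annulus $k\ne n$, $|x-a|$ is comparable to the larger of $2^{-n},2^{-k}$), so the set of $a$ where (ii) holds is contained, annulus by annulus, in a union whose capacities sum (via subadditivity, fact (4), and $C_s(A_n) \le C_s(\B(b,2^{-n+1})) \le 2^{s}2^{-sn}$) to something dominated by $\sum_k \mu(A_k'(b))\sum_n 2^{-|n-k|\theta} \lesssim \|\mu\|$. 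I expect this far-term estimate — making the off-diagonal decay and the interchange of summations rigorous while keeping all constants dependent only on $d$ and $s$ — to be the main technical obstacle; everything else is the two soft inputs (Chebyshev-for-capacity and the no-atom hypothesis giving $\sum\mu(S_n)<\infty$). Countable subadditivity of $C_s$ is what makes the whole annulus-by-annulus patching legitimate.
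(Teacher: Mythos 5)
Your overall skeleton (split the potential at each point of $A_n(b)\cap E$ into a near part and a far part; control the near part by the capacity Chebyshev inequality $C_s(F)\le\lambda^{-1}\|\nu\|$, which is indeed just Fubini against an admissible measure, and sum using the bounded overlap of the shells $S_n$) is sound, and that half of the argument is correct as written. The genuine gap is in the far-field estimate, which you yourself flag as the main obstacle: your claimed pointwise bound fails in exactly the regime where the no-atom hypothesis is the whole point. If $a\in A_n(b)$ and $x$ lies in an annulus $A_k(b)$ with $k\gg n$ (mass much closer to $b$ than $a$ is), then $|x-a|\asymp|a-b|\asymp 2^{-n}$, so the contribution to the potential is $\asymp 2^{sn}\mu(A_k(b))$; measured against the threshold $\epsilon|a-b|^{-s}\asymp\epsilon 2^{sn}$ this is $\asymp\mu(A_k(b))/\epsilon$, with no decay whatsoever in $k-n$. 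Hence no $\theta>0$ makes the bound $C\sum_k 2^{-|n-k|\theta}2^{sk}\mu(A_k(b))$ true, and the ensuing interchange-of-sums bound $\sum_k\mu(A_k(b))\sum_n2^{-|n-k|\theta}\lesssim\|\mu\|$ is not available. Your earlier parenthetical claim that ``the total far-contribution summed over $n$ is controlled by $\|\mu\|$'' fails for the same reason: with $\mu(A_k(b))\asymp k^{-2}$ one has finite mass and no atom at $b$, yet $\mu(\B(b,2^{-n}))\asymp 1/n$ and $\sum_n\mu(\B(b,2^{-n}))=\infty$.

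The repair is qualitative rather than summatory, and it is where the no-atom hypothesis actually enters: since $\mu(\B(b,2^{-n-2}))\to0$, for all $n\ge n_0(\epsilon)$ the inner far contribution is at most, say, one quarter of the threshold $\epsilon 2^{sn}$, so for such $n$ your case (ii) forces the \emph{outer} part ($k\le n-2$) to exceed a fixed fraction of $\epsilon2^{sn}$; there the relative weight really is $2^{-s(n-k)}$, so $\sum_n\sum_{k\le n-2}2^{-s(n-k)}\mu(A_k(b))\le C_s\|\mu\|$ and only finitely many annuli can be bad, each contributing at most $2^{sn}C_s(A_n(b))\le 1$ to the series (by fact (1) for $C_s$); the finitely many indices $n<n_0$ are absorbed by the same trivial bound. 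With that modification your argument closes. For comparison: the paper gives no proof of this lemma at all (it cites Mizuta, Theorem 5.3), and its own proof of the double-layer analogue, Lemma \ref{L:Riesz-2}, proceeds by contradiction, pairing the defining inequality of $E$ against near-extremal measures $\nu_n$ on the bad annuli and invoking dominated convergence; so your direct Chebyshev-plus-annulus decomposition, once corrected as above, is a genuinely different and self-contained route.
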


Here, as usual, $A_n(b)$ denotes an annulus:
$$ A_n(b) = \left\{ x\in\R^d: \frac1{2^{n+1}}\le |x-b| \le \frac1{2^n} 
\right\}.
$$

\begin{corollary}\Label{C:Riesz-1} Under the same assumptions
on $b$,$\mu$,$s$,$\delta$ and $E$, the set $E$ has Lebesgue density zero at 
$b$, i.e.
$$ \lim_{r\downarrow0}\frac{\mathcal{L}^d(E\cap\mathbb{B}(b,r))}{r^d}
=0.$$
\end{corollary}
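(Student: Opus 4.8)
The plan is to derive the corollary directly from Lemma~\ref{L:Riesz-1} by comparing Lebesgue measure of the annular pieces $E\cap A_n(b)$ with their Riesz capacities. The key point is fact (3) from the list of properties of $C_s$: for any set $F\subseteq\R^d$ one has $C_s(F)\ge c\cdot\mathcal{L}^d(F)^{s/d}$, equivalently $\mathcal{L}^d(F)\le (C_s(F)/c)^{d/s}$. Applying this with $F = E\cap A_n(b)$ gives
\[
\mathcal{L}^d(E\cap A_n(b)) \le c^{-d/s}\,C_s(E\cap A_n(b))^{d/s}.
\]
Since Lemma~\ref{L:Riesz-1} asserts $\sum_n 2^{sn}C_s(E\cap A_n(b))<\infty$, the individual terms $a_n := 2^{sn}C_s(E\cap A_n(b))$ tend to $0$; in particular they are bounded, say $a_n\le M$. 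Then $C_s(E\cap A_n(b))^{d/s} = (a_n 2^{-sn})^{d/s} = a_n^{d/s}2^{-nd}\le M^{d/s-1}a_n\,2^{-nd}$ (using $a_n^{d/s}=a_n^{d/s-1}a_n\le M^{d/s-1}a_n$, valid since $d/s>1$), so
\[
\mathcal{L}^d(E\cap A_n(b))\le c^{-d/s}M^{d/s-1}\,a_n\,2^{-nd}.
\]

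Now I would estimate the density ratio. Fix small $r>0$ and choose $N$ with $2^{-(N+1)}\le r\le 2^{-N}$. The ball $\mathbb{B}(b,r)$ is contained in $\mathbb{B}(b,2^{-N})\subseteq \{b\}\cup\bigcup_{n\ge N}A_n(b)$, so
\[
\mathcal{L}^d(E\cap\mathbb{B}(b,r)) \le \sum_{n\ge N}\mathcal{L}^d(E\cap A_n(b))
\le c^{-d/s}M^{d/s-1}\sum_{n\ge N} a_n\,2^{-nd}.
\]
Crucially $2^{-nd}\le 2^{-Nd}\le (2r)^d = 2^d r^d$ for $n\ge N$, so $\sum_{n\ge N}a_n 2^{-nd}\le 2^d r^d\sum_{n\ge N}a_n$. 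Dividing through by $r^d$ yields
\[
\frac{\mathcal{L}^d(E\cap\mathbb{B}(b,r))}{r^d} \le 2^d c^{-d/s}M^{d/s-1}\sum_{n\ge N}a_n,
\]
and since $\sum_n a_n<\infty$ the tail $\sum_{n\ge N}a_n\to0$ as $N\to\infty$, i.e. as $r\downarrow0$. This gives exactly the claimed density-zero statement.

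I do not expect a serious obstacle here: the corollary is a soft consequence of the lemma. The only points needing a little care are the bookkeeping with the exponent $d/s>1$ (to pass from summability of $2^{sn}C_s(E\cap A_n(b))$ to summability of $\mathcal{L}^d(E\cap A_n(b))/2^{-nd}$, one really does need $d/s\ge1$, which holds since $0<s<d$), and the geometric fact that a small ball about $b$ is covered by the annuli $A_n(b)$ with $n\ge N$ together with the single point $b$ (which has measure zero). Everything else is routine summation. One could alternatively phrase the argument more cleanly by noting that $\mathcal{L}^d(E\cap A_n(b))=\oh(2^{-nd})$ as $n\to\infty$ with summable error, which is all that is needed for a density statement; but the explicit tail estimate above is just as quick and makes the "full area density zero" conclusion transparent.
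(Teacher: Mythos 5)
Your argument is correct and is essentially the paper's own: the paper dismisses the corollary as immediate from fact (3), namely $C_s(F)\ge c\,\mathcal{L}^d(F)^{s/d}$, combined with the Wiener-type series of Lemma \ref{L:Riesz-1}, and your annulus-by-annulus comparison with the tail estimate $\sum_{n\ge N}a_n\to 0$ is exactly the bookkeeping that makes this explicit. (In fact only $a_n\to 0$ is needed, not full summability, but your version is perfectly valid.)
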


This lemma is well-known to potential-theorists,
and may be found in the book of Mizuta \cite[Theorem 5.3]{Miz}.
(One should remark that the case $(d,s)=(2,1)$ of the 
 corollary
is due to 
Browder and was used in the proof (1967) of his Metric Density
Theorem \cite[Theorem 3.3.9, p.177]{B}, and that the 
same case of the full theorem was known in 1974
\cite{O}.  The case
$d=2$, $1<s<2$ was given in \cite[page 421]{O2}.
The case $(d,s)=(d,d-2)$ is in
the book of Armitage and Gardiner \cite[Theorem 7.7.2]{Gar}.)

The Corollary is immediate from the fact (cited above) that
the capacity $C_s(E)$ is bounded below by a 
constant multiple of $\mathcal{L}^d(E)^{\frac sd}$.

\subsection{Estimate for the double-layer potential}
The second estimate involves a double-layer potential:
\begin{lemma}\Label{L:Riesz-2} Let $b\in\R^d$, $\mu$ 
be a positive measure on $\R^d\times\R^d$
having compact support and no mass on
$\{b\}\times\R^d$ or on $\R^d\times\{b\}$, let $s>0$, $t>0$, 
$0<s+t<d$, $\epsilon>0$,
and
$$ E = \left\{ a\in\R^d: |a-b|^{s+t}\cdot \int_{\R^d\times\R^d}
\frac{d\mu(x,y)}{|x-a|^s|y-a|^t} \ge \epsilon\right\}.$$
Then
$$ \sum_{n=1}^\infty 2^{(s+t)n}C_{s+t}(A_n(b)\cap E) <+\infty.$$
\end{lemma}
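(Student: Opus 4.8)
The plan is to reduce Lemma \ref{L:Riesz-2} to Lemma \ref{L:Riesz-1} by a splitting argument that separates, at each scale, the part of the double-layer kernel that behaves like a single Riesz potential. Fix $n$ and work on the annulus $A_n(b)$, so that $|a-b|\approx 2^{-n}$ for $a\in A_n(b)$. The idea is to decompose the measure $\mu$ on $\R^d\times\R^d$ according to the size of $|x-b|$ and $|y-b|$ relative to $2^{-n}$. The key observation is that for $a\in A_n(b)$, if $|x-b|\le c\cdot 2^{-n}$ is comparable to or larger than $|a-b|$, then $|x-a|$ is controlled below, and similarly for $y$; the genuinely singular contributions come from the pieces where $x$ (or $y$) is much closer to $b$ than the scale $2^{-n}$.

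First I would split $E\cap A_n(b)$ into two sets $E_n^{(1)}$ and $E_n^{(2)}$, according to whether the contribution to the integral from $\{|x-b|\ge 2^{-n-K}\}$ (for a suitable fixed constant $K$) is at least $\epsilon/2$, or the complementary contribution from $\{|x-b|< 2^{-n-K}\}$ is at least $\epsilon/2$. On the first set, the factor $|x-a|^{-s}$ is bounded by a constant times $2^{sn}$, so that set is contained in the set where $|a-b|^{t}\int |y-a|^{-t}\, d\nu(y)\ge \epsilon'$ for the pushforward-type measure $\nu$ obtained by integrating out $x$ against the bounded kernel; applying Lemma \ref{L:Riesz-1} (with exponent $t$, after also peeling off the $y$ side symmetrically or directly) gives summability of $2^{tn}C_t(E_n^{(1)})$, hence of $2^{(s+t)n}C_{s+t}(E_n^{(1)})$ since $C_{s+t}\le C_t$ on bounded sets up to constants — actually one must be careful here and instead use that on $A_n(b)$, $C_{s+t}(F)\le (2^{-n})^{-s}\cdot$(something); the cleanest route is to note $C_{s+t}(F)\le \diam(F)^{s}\, ? $, so I would instead directly estimate using the single-layer lemma at the correct exponent. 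The symmetric case $|y-b|$ small versus large is handled identically by the roles of $s$ and $t$.

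The remaining and essential case is $E_n^{(3)}$ where \emph{both} $|x-b|$ and $|y-b|$ are small compared to $2^{-n}$: here $|x-a|\approx|y-a|\approx 2^{-n}\approx|a-b|$ up to bounded factors for the bulk of the mass, so the constraint $|a-b|^{s+t}\int\cdots\ge\epsilon$ forces the total mass of $\mu$ near $\{b\}\times\{b\}$ at scale $2^{-n}$ to be bounded below by a constant times $\epsilon$. Since $\mu$ has no mass on $\{b\}\times\R^d$ or $\R^d\times\{b\}$, this near-diagonal-at-$b$ mass tends to zero, and a covering argument (cover $A_n(b)\cap E_n^{(3)}$ efficiently, use $C_{s+t}(\B(a,r))\le r^{s+t}$ and countable subadditivity) shows $\sum_n 2^{(s+t)n}C_{s+t}(A_n(b)\cap E_n^{(3)})$ is dominated by $\sum_n (\text{mass of }\mu\text{ in a }2^{-n}\text{-neighbourhood of }(b,b))$, which is finite because these masses decrease to $0$ — more precisely, they are the tail of a convergent telescoping sum controlled by $\|\mu\|$ and the no-mass hypothesis, exactly as in the proof of Lemma \ref{L:Riesz-1}.

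The hard part will be organising the splitting so that the exponents match up cleanly: the single-layer Lemma \ref{L:Riesz-1} produces $\sum 2^{sn}C_s$, but here we need $\sum 2^{(s+t)n}C_{s+t}$, and the extra factor $2^{tn}$ must be absorbed by the gain coming from the boundedness of the other kernel factor on the relevant annulus. Keeping track of which capacity ($C_s$, $C_t$, or $C_{s+t}$) is used where, and verifying the inequalities $C_{s+t}(F)\le C_s(F)^{?}$ do not secretly require more than subadditivity and the ball estimate, is the delicate bookkeeping; I expect the cleanest formulation applies Lemma \ref{L:Riesz-1} with the parameter $s+t$ directly by first bounding one of the two kernel factors by its value at the scale $2^{-n}$, converting the double-layer constraint into a single-layer constraint of order $s+t$ against an auxiliary measure whose total mass is controlled by $\|\mu\|$.
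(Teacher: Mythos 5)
Your overall aim---reducing Lemma \ref{L:Riesz-2} to the single-layer Lemma \ref{L:Riesz-1} by a scale-by-scale splitting---is reasonable, but the splitting you set up fails at its key step, and it fails precisely in the only genuinely difficult configuration. On the piece $\{|x-b|\ge 2^{-n-K}\}$ you claim $|x-a|^{-s}\le C\,2^{sn}$ for $a\in A_n(b)$; this is false, since $a$ itself satisfies $|a-b|\ge 2^{-n-1}\ge 2^{-n-K}$, so $x$ may be arbitrarily close to (indeed equal to) $a$ while lying in that piece. The region where that kernel factor \emph{is} controlled is the opposite one, $|x-b|<2^{-n-K}$ (then $|x-a|\ge 2^{-n-2}$), and likewise your ``essential'' case $E_n^{(3)}$ (both $x$ and $y$ much nearer to $b$ than $2^{-n}$) is in fact the harmless one: there $|x-a|\approx|y-a|\approx|a-b|$, and since $\mu$ has no mass at $(b,b)$ that case is simply empty for all large $n$ (your claim that the masses of $\mu$ on $2^{-n}$-neighbourhoods of $(b,b)$ have finite sum ``because they decrease to $0$'' is a non sequitur, though unneeded). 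What is never handled is the true crux: $x$ and $y$ both at distance comparable to $2^{-n}$ from $b$, where $|x-a|^{-s}$ and $|y-a|^{-t}$ can be singular simultaneously; none of your cases covers this correctly, and bounding ``one kernel factor by its value at scale $2^{-n}$'' does not work there. You also leave unproved the conversion of summability of $2^{tn}C_t$ (or $2^{sn}C_s$) into summability of $2^{(s+t)n}C_{s+t}$; this is fixable for subsets of $A_n(b)$, but it does not follow from subadditivity and the ball estimates alone.

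The missing idea is the elementary pointwise inequality $\frac1{|x-a|^s|y-a|^t}\le \frac1{|x-a|^{s+t}}+\frac1{|y-a|^{s+t}}$ (compare both factors with the smaller of the two distances). With it, $E\subset E_x\cup E_y$, where $E_x$ and $E_y$ are defined by order-$(s+t)$ potentials of the two coordinate marginals of $\mu$; these marginals have no mass at $b$ precisely because $\mu$ has no mass on $\{b\}\times\R^d$ or $\R^d\times\{b\}$, so Lemma \ref{L:Riesz-1} with exponent $s+t$ and the subadditivity of $C_{s+t}$ would finish the proof, with no scale-dependent splitting at all. The paper takes a different route: a duality argument by contradiction, testing the defining inequality of $E$ against near-extremal measures $\nu_n$ for $C_{s+t}(A_n(b)\cap E)$, forming $\lambda_N=\sum_n 2^{(s+t)n}\nu_n$, and bounding the dual potential $G_N(x,y)$ by a four-case analysis in which the doubly-singular case is dispatched by this same max inequality together with the admissibility of $\nu_n$ (and the observation that this case occurs for at most three values of $n$), before applying dominated convergence. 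As written, your argument does not contain that inequality or any substitute for it, so it does not prove the lemma.
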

 
This seems to be new, and we shall provide a proof.

We remark that the integral is finite for almost all
$a$ 
with respect to area measure, as may be seen by applying Fubini's
theorem.

\begin{proof}[Proof of Lemma \ref{L:Riesz-2}]
Suppose that, on the contrary,
$$ \sum_{n=1}^\infty 2^{(s+t)n}C_{s+t}(A_n(b)\cap E) =+\infty.$$

Fix a positive number $M$ greater than the diameter
of the support of $\mu$ and greater than the distance from
$(b,b)$ to any point of the support of $\mu$. 

Fix $N\in\N$. Since $C_{s+t}(A_n(b)\cap E)$ does not exceed
$2^{-(s+t)n}$, we may choose $M\ge N$ (depending on $N$)
such that
$$ 2\le \sum_{n=N}^M 2^{(s+t)n}C_{s+t}(A_n(b)\cap E) \le3.$$
For each $n$ from $N$ to $M$, choose a positive measure $\nu_n$
supported on a compact subset of $A_n(b)\cap E$, with
$\nu_n*\frac1{|x|^s}\le1$ and $\|\nu_n\|\ge \half\cdot C_{s+t}(A_n(b)\cap E)
$.
Let $\lambda_N:= \sum_N^M 2^{(s+t)n}\nu_n$.
Then $\lambda_N$ is supported in $\B(b,2^{-N})$
and has $1\le\|\lambda_N\|\le3$. Also
$$\begin{array}{rcl}
\delta &\le&
\dsty\int|a-b|^{s+t}\int\frac{
d\mu(x,y)}{
|x-a|^s|y-a|^t}
d\lambda_N(a)\\
&=&\dsty\int
G_N(x,y)d\mu(x,y),
\end{array}
$$
where
$$ G_N(x,y):= 
\int \frac{|a-b|^{s+t}}{
|x-a|^s|y-a|^t}
d\lambda_N(a).
$$
We claim (1) that $G_N(x,y)$ is bounded above, independently of
$N$, $x$ and $y$, for all $(x,y)$ in the support of
$\mu$ with
$$(x,y)\not\in \left(\{b\}\times R^d\right) \cup \left(\R^d\times\{b\}\right),$$
and (2) that $G_N(x,y)\to 0$ pointwise there as $N\uparrow\infty$.
Once we have this claim, it will follow from the Lebesgue Dominated 
Convergence Theorem that $\delta\le0$, a contradiction,
and the proof will be complete.

\medskip
To prove the claims, fix $x\not=b$ and $y\not=b$
with $(x,y)$ in the support of $\mu$.

For large $N$, and $a$ belonging to the support of
$\lambda_N$, we have that $|x-a|$ is comparable to $|x-b|$
and $|y-a|$ is comparable to $|y-b|$, so $G_N(x,y)$
is bounded above by twice
$$\frac1{|x-b|^s\cdot|y-b|^t}
\int|a-b|^{s+t}d\lambda_N(a) 
\le
\frac{2^{-(s+t)N}\|\lambda_N\|}{|x-b|^s\cdot|y-b|^t},
$$
which tends to zero as $N\uparrow\infty$.
This proves claim (2).

To get a bound for $G_n(x,y)$, choose  $p\in\N$ and $q\in\N$
with $x\in A_p(b)$ and $y\in A_q(b)$.

Fix $n$, with $N\le n\le M$, and 
fix $a$ in the support of $\nu_n$.

Consider the following cases, which together cover all possibilities:

\begin{enumerate}
\item\label{Case1}$|p-n|>1$ and $|q-n|>1$.
\item\label{Case2}$|p-n|>1$ and $|q-n|\le1$.
\item\label{Case3}$|p-n|\le1$ and $|q-n|>1$.
\item\label{Case4}$|p-n|\le1$ and $|q-n|\le1$.
\end{enumerate}
Observe that for given $x$ and $y$, there will be at most
$3$ values of $n$ for which Case \ref{Case2} holds,
and that the same is true for Cases \ref{Case3} and \ref{Case4}.

\subsubsection{Case (\ref{Case1})}
We have $|x-a|\ge2^{-(n+2)}$, $|y-a|\ge2^{-(n+2)}$, $|a-b|\le2^{-n}$,
so
$$
2^{(s+t)n}\int\frac{|a-b|^{s+t}}{
|x-a|^s\cdot|y-a|^t}
d\nu_n(a)
\le 4^{s+t}\cdot2^{(s+t)n}\cdot\|\nu_n\|.
$$
\subsubsection{Case (\ref{Case2})} We have
$$\begin{array}{rcl}
\dsty
&&2^{(s+t)n}\dsty\int\frac{|a-b|^{s+t}}{
|x-a|^s\cdot|y-a|^t}
d\nu_n(a)\\
&\le&
\dsty4^s\cdot\int\frac{d\nu_n(a)}{
|y-a|^t}
\\
&\le&
\dsty4^s\cdot(M+1)^s\cdot\int\frac{d\nu_n(a)}{
|y-a|^{s+t}} \le 4^s\cdot(M+1)^s,
\end{array}
$$
since $|y-a|\le |y-b|+|b-a|\le M+1$. 

\subsubsection{Case (\ref{Case3})} This is similar to Case (\ref{Case2}),
and we get the bound $4^t\cdot(M+1)^t$.

\subsubsection{Case (\ref{Case4})}
If $|y-a|\le|x-a|$, then
$$ \frac1{|x-a|^s\cdot|y-a|^t} \le \frac1{|y-a|^{s+t}},$$
and if $|x-a|\le|y-a|$, then
$$ \frac1{|x-a|^s\cdot|y-a|^t} \le \frac1{|x-a|^{s+t}},$$
so in either case
$$ \frac1{|x-a|^s\cdot|y-a|^t} \le 
\max\left\{
\frac1{|x-a|^{s+t}},\frac1{|y-a|^{s+t}}
\right\},$$
hence
$$\begin{array}{rcl}
\dsty
&&2^{(s+t)n}\dsty\int\frac{|a-b|^{s+t}}{
|x-a|^s\cdot|y-a|^t}
d\nu_n(a)\\
&\le&
\dsty\max\left\{
\int\frac{d\nu_n(a)}{|x-a|^{s+t}},
\int\frac{d\nu_n(a)}{|y-a|^{s+t}}
\right\}
\le1.
\end{array}
$$

Thus, combining all these estimates, we get
$$ 
\begin{array}{rcl}
G_N(x,y)&\le& 
\dsty4^{s+t}\sum_N^M2^{(s+t)n}\|\nu_n\| + 6\cdot(4M+4)^{\max\{s,t\}}+3\cdot1 
\\
&\le& 12\cdot(4M+4)^{s+t}.
\end{array}
$$
This proves claim (1) and concludes the proof
of the lemma.
\end{proof}

Again we obtain:
\begin{corollary}\Label{C:Riesz-2} Under the same assumptions
on $b$,$\mu$,$s$,$\delta$ and $E$, the set $E$ has Lebesgue 
density zero at $b$.
\end{corollary}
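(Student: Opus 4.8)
The plan is to deduce this from Lemma~\ref{L:Riesz-2} together with fact~(3) about the Riesz capacities, in exactly the way Corollary~\ref{C:Riesz-1} is deduced from Lemma~\ref{L:Riesz-1}. First I would observe that, since the series $\sum_n 2^{(s+t)n}C_{s+t}(A_n(b)\cap E)$ converges, its general term tends to zero, so
$$2^{(s+t)n}\,C_{s+t}(A_n(b)\cap E)\lo 0\qquad(n\to\infty).$$
Applying the lower bound $C_{s+t}(F)\ge c\cdot\mathcal{L}^d(F)^{(s+t)/d}$ with $F=A_n(b)\cap E$ and then raising to the power $d/(s+t)$, this becomes
$$2^{dn}\,\mathcal{L}^d(A_n(b)\cap E)\lo 0\qquad(n\to\infty).$$

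Next I would pass from this dyadic-scale decay to the stated radial density statement by the routine telescoping estimate. Writing $\eta_n:=2^{dn}\mathcal{L}^d(A_n(b)\cap E)$, so that $\eta_n\to0$, and given a small $r>0$, I would choose $n$ with $2^{-n-1}\le r\le 2^{-n}$. Since $\mathbb{B}(b,r)\setminus\{b\}$ lies, up to a null set, inside $\bigcup_{m\ge n}A_m(b)$, we get
$$\mathcal{L}^d(E\cap\mathbb{B}(b,r))\le\sum_{m\ge n}\mathcal{L}^d(E\cap A_m(b))=\sum_{m\ge n}\eta_m 2^{-dm}\le\Big(\sup_{m\ge n}\eta_m\Big)\frac{2^{-dn}}{1-2^{-d}},$$
and then, using $r\ge 2^{-n-1}$,
$$\frac{\mathcal{L}^d(E\cap\mathbb{B}(b,r))}{r^d}\le\frac{2^d}{1-2^{-d}}\sup_{m\ge n}\eta_m\lo0$$
as $r\downarrow0$ (equivalently $n\to\infty$), which is precisely the assertion that $E$ has Lebesgue density zero at $b$.

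There is no real obstacle here: the corollary is a soft consequence of the lemma. The only point needing a moment's attention is the passage from the estimate at dyadic scales to the statement for all radii $r$, and this is handled by the elementary geometric-series bound above — the step that is suppressed in the one-line proof of Corollary~\ref{C:Riesz-1}. Alternatively, one could simply invoke the standard equivalence in this circle of ideas between a thinness condition of the form $\sum_n 2^{\beta n}C_\beta(A_n(b)\cap E)<\infty$ and Lebesgue density zero at $b$, but spelling out the two lines is just as quick.
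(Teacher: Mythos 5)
Your proposal is correct and follows exactly the paper's route: the paper deduces Corollary \ref{C:Riesz-2} (as with Corollary \ref{C:Riesz-1}) directly from the lower bound $C_{s+t}(F)\ge c\,\mathcal{L}^d(F)^{(s+t)/d}$ applied to the annular pieces, which is precisely your argument with the dyadic-to-radial telescoping spelled out. No discrepancies; you have merely written out the details the paper leaves implicit.
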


We expect that similar lemmas could be proved about
triple-layer potentials, and so on.

\subsection{A Refined Estimate}
We can do better if we consider potentials with
a factor $|x-y|^u$ (with $0<u<1$) on the measure:

\begin{lemma}\Label{L:Riesz-3} Let $b\in\R^d$, $\mu$ 
be a positive measure on $\R^d\times\R^d$
having compact support and no mass on
$\{b\}\times\R^d$ or on $\R^d\times\{b\}$.
Let $s>0$, $t>0$, $0<u<\min\{1,s,t\}$ and $s+t-u<d$.
Let $\epsilon>0$,
and
$$ E := \left\{ a\in\R^d: |a-b|^{s+t-u}\cdot \int_{\R^d\times\R^d}
\frac{|x-y|^ud\mu(x,y)}{|x-a|^s|y-a|^t} \ge \epsilon\right\}.$$
Then
$$ \sum_{n=1}^\infty 2^{(s+t-u)n}C_{s+t-u}(A_n(b)\cap E) <+\infty.$$
\end{lemma}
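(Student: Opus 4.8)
The plan is to reduce this to Lemma \ref{L:Riesz-2} by redistributing the factor $|x-y|^u$ between the two kernels. The only genuine input is the subadditivity of $r\mapsto r^u$ on $[0,\infty)$, which holds because $0<u\le1$: from $|x-y|\le|x-a|+|y-a|$ we get $|x-y|^u\le|x-a|^u+|y-a|^u$, and hence
$$\frac{|x-y|^u}{|x-a|^s\,|y-a|^t}\ \le\ \frac1{|x-a|^{s-u}\,|y-a|^t}\ +\ \frac1{|x-a|^s\,|y-a|^{t-u}}$$
for every $a$, with the usual convention $c/0=+\infty$ for $c>0$. Integrating this bound against $\mu$ and multiplying by $|a-b|^{s+t-u}$ shows that, setting
$$E_1:=\left\{a\in\R^d:\ |a-b|^{s+t-u}\int_{\R^d\times\R^d}\frac{d\mu(x,y)}{|x-a|^{s-u}\,|y-a|^t}\ \ge\ \frac\epsilon2\right\}$$
and
$$E_2:=\left\{a\in\R^d:\ |a-b|^{s+t-u}\int_{\R^d\times\R^d}\frac{d\mu(x,y)}{|x-a|^s\,|y-a|^{t-u}}\ \ge\ \frac\epsilon2\right\},$$
one has $E\subseteq E_1\cup E_2$.

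Next I would invoke Lemma \ref{L:Riesz-2} twice, applied to the same measure $\mu$ (whose mass hypotheses are exactly those demanded there), with $\epsilon/2$ in place of $\epsilon$: once with the exponent pair $(s-u,\,t)$, which is admissible since $s-u>0$, $t>0$ and $(s-u)+t=s+t-u<d$; and once with the pair $(s,\,t-u)$, admissible since $s>0$, $t-u>0$ and $s+(t-u)=s+t-u<d$. This gives
$$\sum_{n=1}^\infty 2^{(s+t-u)n}\,C_{s+t-u}\bigl(A_n(b)\cap E_i\bigr)\ <\ +\infty,\qquad i=1,2.$$
Since $A_n(b)\cap E\subseteq\bigl(A_n(b)\cap E_1\bigr)\cup\bigl(A_n(b)\cap E_2\bigr)$ and $C_{s+t-u}$ is subadditive, adding $2^{(s+t-u)n}$ times the two series termwise yields $\sum_n 2^{(s+t-u)n}\,C_{s+t-u}(A_n(b)\cap E)<+\infty$, which is the assertion.

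I do not expect a real obstacle here: the substance is carried entirely by Lemma \ref{L:Riesz-2}, and the remaining work is bookkeeping. It is worth noting that the hypotheses of Lemma \ref{L:Riesz-3} enter in precisely the two places one would predict — $u\le1$ is what makes $r\mapsto r^u$ subadditive, while $u<\min\{s,t\}$ together with $s+t-u<d$ is exactly what keeps both reduced exponent pairs positive with admissible sum, so that Lemma \ref{L:Riesz-2} applies. The same splitting would, after iteration, presumably accommodate a weight $|x-y|^u$ with larger non-integer $u$, and an analogue with weights on triple-layer potentials once the corresponding extension of Lemma \ref{L:Riesz-2} is in hand.
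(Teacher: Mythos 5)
Your proof is correct and is essentially the paper's own argument: split the kernel using subadditivity of $r\mapsto r^u$, apply Lemma \ref{L:Riesz-2} with the exponent pairs $(s-u,t)$ and $(s,t-u)$, and finish with subadditivity of $C_{s+t-u}$. (You even state the needed inclusion correctly as $E\subseteq E_1\cup E_2$, whereas the paper's text misprints it as $E_1\cap E_2\subset E$.)
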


\begin{proof}
Since $0<u<1$, we have $|x-y|^u\le|x-a|^u+|y-a|^u$, so
$$
\frac{|x-y|^u}{|x-a|^s|y-a|^t} 
\le \frac{1}{|x-a|^{s-u}|y-a|^t} 
+
\frac{1}{|x-a|^s|y-a|^{t-u}}.
$$ 
Thus $E_1\cap E_2\subset E$, where
$$ 
E_1 := \left\{ a\in\R^d: |a-b|^{s+t-u}\cdot \int_{\R^d\times\R^d}
\frac{d\mu(x,y)}{|x-a|^{s-u}|y-a|^t} \ge \epsilon/2\right\}
$$
and
$$
 E_2 := \left\{ a\in\R^d: |a-b|^{s+t-u}\cdot \int_{\R^d\times\R^d}
\frac{d\mu(x,y)}{|x-a|^s|y-a|^{t-u}} \ge \epsilon/2\right\}.
$$
Applying Lemma \ref{L:Riesz-2} twice, we see that $E_1$ and
$E_2$ satisfy
$$ \sum_{n=1}^\infty 2^{(s+t-u)n}C_{s+t-u}(A_n(b)\cap E_j) <+\infty,$$
so the result follows from the subadditivity of $C_{s+t-u}$.
\end{proof}

Again we obtain:
\begin{corollary}\Label{C:Riesz-3} Under the same assumptions
on $b$,$\mu$,$s$,$\delta$ and $E$, the set $E$ has Lebesgue 
density zero at $b$.
\end{corollary}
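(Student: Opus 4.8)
The plan is to deduce Corollary~\ref{C:Riesz-3} from Lemma~\ref{L:Riesz-3} in exactly the way Corollaries~\ref{C:Riesz-1} and~\ref{C:Riesz-2} were deduced from their lemmas: feed the capacity estimate into fact~(3) about the Riesz capacities. Write $\sigma=s+t-u$, so that $0<\sigma<d$. Lemma~\ref{L:Riesz-3} gives
$$ \sum_{n=1}^\infty 2^{\sigma n}C_\sigma(A_n(b)\cap E)<+\infty,$$
and since $C_\sigma(F)\ge c\cdot\mathcal{L}^d(F)^{\sigma/d}$ for every $F\subseteq\R^d$, this yields
$$ \sum_{n=1}^\infty 2^{\sigma n}\mathcal{L}^d(A_n(b)\cap E)^{\sigma/d}<+\infty.$$
In particular the general term tends to $0$, and raising to the power $d/\sigma$ shows that $2^{dn}\mathcal{L}^d(A_n(b)\cap E)\to 0$ as $n\to\infty$.

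From here the argument is routine. Given $\eta>0$, I would choose $N_0$ so that $\mathcal{L}^d(A_n(b)\cap E)<\eta\,2^{-dn}$ for all $n\ge N_0$. The annuli $A_n(b)$, $n\ge N$, cover $\B(b,2^{-N})\setminus\{b\}$ and overlap only on spheres, so for $N\ge N_0$,
$$ \mathcal{L}^d\bigl(E\cap\B(b,2^{-N})\bigr)\le\sum_{n\ge N}\mathcal{L}^d(A_n(b)\cap E)<\eta\sum_{n\ge N}2^{-dn}=\frac{\eta}{1-2^{-d}}\,2^{-dN}.$$
Hence $\mathcal{L}^d(E\cap\B(b,2^{-N}))/2^{-dN}\to 0$ as $N\to\infty$. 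Finally, for an arbitrary small $r>0$ I would pick $N$ with $2^{-(N+1)}\le r\le 2^{-N}$; then by monotonicity of $\mathcal{L}^d$ and the comparison $r^d\ge 2^{-d}\,2^{-dN}$ we get $\mathcal{L}^d(E\cap\B(b,r))/r^d\le 2^d\,\mathcal{L}^d(E\cap\B(b,2^{-N}))/2^{-dN}$, which tends to $0$. This is precisely the statement that $E$ has Lebesgue density zero at $b$.

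I do not expect any real obstacle here: all of the substance sits in Lemma~\ref{L:Riesz-3} and the lemmas feeding it. The only points needing a word of care are the conversion of the capacity bound into decay of the annular areas (which uses nothing beyond the fact that the terms of a convergent series of nonnegative reals tend to $0$, together with fact~(3)) and the passage from density along the dyadic radii $2^{-N}$ to density along all $r\downarrow 0$, which costs only the harmless dimensional factor $2^d$. So the write-up should be a couple of lines, as for the two preceding corollaries.
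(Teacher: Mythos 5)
Your argument is correct and is exactly the paper's intended route: the paper deduces Corollary~\ref{C:Riesz-3} from Lemma~\ref{L:Riesz-3} via the same fact~(3) lower bound $C_\sigma(F)\ge c\,\mathcal{L}^d(F)^{\sigma/d}$ (equivalently, the remark that an $\sigma$-thin set with $0<\sigma<d$ has Lebesgue density zero at $b$), merely leaving implicit the dyadic summation and the passage from radii $2^{-N}$ to general $r$ that you spell out. No gap; your write-up just makes the "immediate" step explicit.
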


\subsection{A Density Lemma}
In the next lemma, we remove a restriction on $\mu$:
the measure is allowed
to have mass on the horizontal and vertical slices 
through the point $b$, as long as it has no mass
at the point $(b,b)$.  The conclusion is not the convergence
of a Wiener-type series, but that an exceptional set
has Lebesgue density zero. 

\begin{lemma}\Label{L:Riesz-4}
Let $b\in\R^d$, and $\mu$ 
be a positive measure on $\R^d\times\R^d$
having compact support and no mass at the
point $(b,b)$. 
Let $s>0$, $t>0$, $0<u<\min\{1,s,t\}$ and $s+t-u<d$.
Let $\epsilon>0$,
and
$$ E := \left\{ a\in\R^d: |a-b|^{s+t-u}\cdot \int_{\R^d\times\R^d}
\frac{|x-y|^ud\mu(x,y)}{|x-a|^s|y-a|^t} \ge \epsilon\right\}.$$
Then
$E$ has Lebesgue density zero at $b$.
\end{lemma}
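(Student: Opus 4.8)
The plan is to decompose $\mu$ according to how much of its mass lies on the two coordinate ``slices'' $P := \{b\}\times\R^d$ and $Q := \R^d\times\{b\}$ through $b$. Since $\mu$ has no mass at $(b,b)=P\cap Q$, I would split $\mu = \mu_0+\mu_P+\mu_Q$ with $\mu_0 := \mu|_{(\R^d\times\R^d)\setminus(P\cup Q)}$, $\mu_P := \mu|_{P\setminus Q}$ and $\mu_Q := \mu|_{Q\setminus P}$; each summand still has compact support. If $a$ lies in the exceptional set $E$, then at least one of the three corresponding integrals, multiplied by $|a-b|^{s+t-u}$, is $\ge\epsilon/3$, so $E$ is contained in the union of the three exceptional sets $E_0$, $E_P$, $E_Q$ defined with threshold $\epsilon/3$, and it is enough to show that each of these has Lebesgue density zero at $b$.

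For $E_0$ there is nothing new to do: $\mu_0$ has no mass on $P$ and none on $Q$, so Lemma \ref{L:Riesz-3} applies to it verbatim, and Corollary \ref{C:Riesz-3} then gives that $E_0$ has density zero at $b$.

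The real content is the two slice terms, which are symmetric under the exchange $(s,x)\leftrightarrow(t,y)$, so it suffices to treat $E_P$. I would first push $\mu_P$ forward to its $y$-coordinate, obtaining a positive, compactly supported measure $\sigma$ on $\R^d$ with no mass at $b$; the quantity defining $E_P$ then becomes
$$ |a-b|^{s+t-u}\int_{\R^d}\frac{|b-y|^u\,d\sigma(y)}{|b-a|^{s}\,|y-a|^{t}}
 \;=\; |a-b|^{t-u}\int_{\R^d}\frac{|b-y|^u\,d\sigma(y)}{|y-a|^{t}}, $$
the point being that one variable of the ``double layer'' is now frozen at $b$ and the awkward power $|b-a|^{s}$ in the denominator cancels. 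Next, using $0<u<1$ to write $|b-y|^u\le|b-a|^u+|a-y|^u$, I would bound the right-hand side by
$$ |a-b|^{t}\int_{\R^d}\frac{d\sigma(y)}{|y-a|^{t}}
 \;+\; |a-b|^{t-u}\int_{\R^d}\frac{d\sigma(y)}{|y-a|^{t-u}} . $$
Since $u<\min\{s,t\}$ and $s+t-u<d$ force $0<t<d$ and $0<t-u<d$ (for instance $t=(s+t-u)-(s-u)<d$), Corollary \ref{C:Riesz-1} applies to each of these two ordinary Riesz potentials, with orders $t$ and $t-u$ and threshold $\epsilon/6$, showing that the two associated exceptional sets have density zero; hence so does their union, which contains $E_P$. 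The treatment of $E_Q$ is the mirror image, with $s$ and $t$ exchanged, producing Riesz potentials of orders $s$ and $s-u$.

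The step I expect to be the crux is the handling of the slice terms, and it is the reason a fresh argument is needed rather than a limiting perturbation of Lemma \ref{L:Riesz-3}: on $P$ or $Q$ the kernel is no longer a genuine double-layer kernel, so that lemma cannot be invoked directly. What makes it work is the cancellation of the frozen-variable factor together with the sub-additivity of $r\mapsto r^u$, which turns each degenerate double-layer potential into a sum of two honest Riesz potentials of orders strictly below $d$, at which point Corollary \ref{C:Riesz-1} closes the argument.
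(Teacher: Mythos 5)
Your proposal is correct and follows essentially the same route as the paper: split $\mu$ into the part off the slices $\{b\}\times\R^d$ and $\R^d\times\{b\}$ (handled by Lemma \ref{L:Riesz-3}) and the two slice parts, then for each slice part use the subadditivity of $r\mapsto r^u$ to reduce the degenerate double-layer potential to single-layer Riesz potentials of orders $t$, $t-u$ (resp.\ $s$, $s-u$) and invoke the single-layer result. The only cosmetic difference is that you push forward to the free coordinate before splitting $|x-y|^u$, whereas the paper splits first and then factors out the point mass at $b$; the resulting exponents and exceptional sets are identical.
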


For convenience, let us say that a set $E$ is
{\em $s$-thin at $b$} if 
$$ \sum_{n=1}^\infty 2^{sn}C_{s}(A_n(b)\cap E) <+\infty.$$
As mentioned already, if $E$ is $s$-thin at $b$ for some
$s$ with $0<s<d$, then $E$ has Lebesgue density 
zero at $b$. So if $E\subset E_1\cup\cdots\cup E_n$
and for each $j$ there is some $s_j\in(0,d)$ such that
$E_j$ is $s_j$-thin, then $E$ has Lebesgue density
zero at $b$.

\begin{proof}
Let us denote the double-layer potential in the statement by $P(\mu)$: 
$$ P(\mu)(a):=
\int_{\R^d\times\R^d}
\frac{|x-y|^ud\mu(x,y)}{|x-a|^s|y-a|^t}.$$
Write $\mu=\mu_1+\mu_2+\mu_3$, where 
$\mu_2$
is the restriction of $\mu$ to $V:= \{b\}\times\R^d$
(i.e. $\mu_2(T)=\mu(T\cap V)$ for all sets $T$),
$\mu_3$
is the restriction of $\mu$ to $H:= \R^d\times\{b\}$,
and $\mu_1$
is the restriction of $\mu$ to the complement of $H\cup V$.
Then we may write
$P(\mu)  = P_1(a)+P_2(a)+P_3(a)$, where $P_j:= P(\mu_j)$.

Note that none of the $\mu_j$ has a point mass at $(b,b)$
and that $\mu_1$ has no mass on $V\cup H$.

Let
$$ E_j:= \left\{
a:
|a-b|^{s+t-u}\cdot P_j(a) \ge \delta/3
\right\}
$$
for $j=1$,$2$,$3$.  Then
$E\subset E_1\cup E_2\cup E_3$, so it suffices to show that
each $E_j$ has Lebesgue density zero at $b$.

\medskip
Lemma \ref{L:Riesz-3} applies, with $\mu$ replaced by $\mu_1$,
and tells us that $E_1$ is $(s+t-u)$-thin at $b$,
so the case $j=1$ is done.

\medskip
Consider the case $j=2$.

Using the fact that $t\mapsto t^u$ is subadditive, as in
the proof of Lemma \ref{L:Riesz-3}, we see that
$P_2(a)\le P_{21}(a) + P_{22}(a)$, where
$$ \begin{array}{rcl}
P_{21}(a) &=& \dsty
\int
\frac{d\mu(x,y)}{
|x-a|^{s-u}\cdot|y-a|^{t}}
\\
P_{22}(a) &=& \dsty
\int
\frac{d\mu(x,y)}{
|x-a|^s\cdot|y-a|^{t-u}}.
\end{array}
$$

Let
$$ E_{2k}:= \left\{
a:
|a-b|^{s+t-u}\cdot P_{2k}(a) \ge \delta/6
\right\}
$$
for $k=1$,$2$.  Then 
$ E_2\subset E_{21}\cup E_{22}$, so it
suffices to show that both 
$E_{2k}$ have Lebesgue density zero at $b$.

\smallskip
Consider 
$E_{21}$.  The measure $\mu_2$
is supported on $V=\{b\}\times\R^d$, so
it is the product of the unit point mass at $b$
and a measure $\mu_{22}$ on $\R^d$, and we have
$$ P_{21}(a) = \frac1{|b-a|^{s-u}}\cdot
\int_{\R^d} 
\frac{d\mu_{22}(y)}{
|y-a|^t},
$$
so 
$$ E_{21} = \left\{
a: 
|a-b|^t\cdot
\int_{\R^d} 
\frac{d\mu_{22}(y)}{
|y-a|^t} \ge \delta/6
\right\}.
$$
Since $\mu_2$ has no point mass at $(b,b)$,
it follows that $\mu_{21}$ has no point mass
at $b$.
Thus we may apply Lemma \ref{L:Riesz-1},
with $s$ replaced by $t$, to conclude that $E_{21}$
is $t$-thin at $b$.

Next, consider 
$E_{22}$.  
We have
$$ P_{22}(a) = \frac1{|b-a|^s}\cdot
\int_{\R^d} 
\frac{d\mu_{22}(y)}{
|y-a|^{t-u}},
$$
so 
$$ E_{22} = \left\{
a: 
|a-b|^{t-u}\cdot
\int_{\R^d} 
\frac{d\mu_{22}(y)}{
|y-a|^{t-u}} \ge \delta/6
\right\}.
$$
So we may apply Lemma \ref{L:Riesz-1},
with $s$ replaced by $t-u$, to conclude that $E_{22}$
is $(t-u)$-thin at $b$.

Thus $E_2$ has density zero at $b$. 

\medskip
A completely parallel argument shows that $E_3$
is contained in the union of an $s$-thin set and
an $(s-u)$-thin set, and hence has density zero at $b$.

The proof is complete.
\end{proof}

The above argument does not work if we try to
relax the condition $u<\min\{1,s,t\}$.  
In the sequel we shall encounter a case
with $d=2$ and $s=t=u=1$, so we record
a substitute result:
\begin{lemma}\Label{L:Riesz-5}
Let $b\in\R^d$, and $\mu$ 
be a positive measure on $\R^d\times\R^d$
having compact support and no mass at the
point $(b,b)$. 
Let $s>0$, $t>0$, $0<u\le\min\{1,s,t\}$ and $s+t-u<d$.
Then there exists a positive constant $M$
such that the set
$$ E := \left\{ a\in\R^d: |a-b|^{s+t-u}\cdot \int_{\R^d\times\R^d}
\frac{|x-y|^ud\mu(x,y)}{|x-a|^s|y-a|^t} \ge M\right\}.$$
has Lebesgue density zero at $b$.
\end{lemma}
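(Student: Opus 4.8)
The plan is to adapt the proof of Lemma~\ref{L:Riesz-4}, keeping track of the places where the borderline hypothesis $u=\min\{1,s,t\}$ makes a term degenerate. As there, split $\mu=\mu_1+\mu_2+\mu_3$, where $\mu_2$ is the restriction of $\mu$ to $V:=\{b\}\times\R^d$, $\mu_3$ the restriction to $H:=\R^d\times\{b\}$, and $\mu_1$ the restriction to the complement of $V\cup H$; then no $\mu_j$ has mass at $(b,b)$, and $\mu_1$ has no mass on $V\cup H$. Write $P_j(a)$ for the double-layer potential of $\mu_j$ appearing in the statement. Since $0<u\le1$ we still have $|x-y|^u\le|x-a|^u+|y-a|^u$ (it is the triangle inequality when $u=1$), so for each $j$
\[
|a-b|^{s+t-u}P_j(a)\le|a-b|^{s+t-u}\!\left(\int\frac{d\mu_j(x,y)}{|x-a|^{s-u}|y-a|^t}+\int\frac{d\mu_j(x,y)}{|x-a|^s|y-a|^{t-u}}\right).
\]
The argument then analyses the six resulting terms, the point being that when $s-u$ or $t-u$ vanishes the corresponding term either collapses to an ordinary Riesz potential (for $j=1$) or to a fixed finite constant (for $j=2,3$).

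First I would dispose of $\mu_1$. In the first term, if $s-u>0$ then Lemma~\ref{L:Riesz-2} (with exponents $s-u$ and $t$, summing to $s+t-u<d$) shows that for every $c>0$ the set $\{a:|a-b|^{s+t-u}\int d\mu_1/(|x-a|^{s-u}|y-a|^t)\ge c\}$ is $(s+t-u)$-thin at $b$; if $s-u=0$ the inner integral equals $\int d\nu(y)/|y-a|^t$, where $\nu$ is the image of $\mu_1$ under the second coordinate projection and has no mass at $b$ because $\mu_1(H)=0$, and then (since $s+t-u=t$ here) Lemma~\ref{L:Riesz-1}, applied with $s$ replaced by $t$, shows the same set is $t$-thin. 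The second term is handled identically with the two coordinates exchanged. Hence $\{a:|a-b|^{s+t-u}P_1(a)\ge c\}$ lies in a union of two thin sets, for every $c>0$.

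Next I would treat $\mu_2$: being supported on $V$ it is $\delta_b\otimes\mu_2'$ with $\mu_2'$ a compactly supported positive measure on $\R^d$ carrying no mass at $b$, and then, using $|b-y|^u\le|b-a|^u+|y-a|^u$,
\[
|a-b|^{s+t-u}P_2(a)=|a-b|^{t-u}\int\frac{|b-y|^u\,d\mu_2'(y)}{|y-a|^t}\le|a-b|^t\int\frac{d\mu_2'(y)}{|y-a|^t}+|a-b|^{t-u}\int\frac{d\mu_2'(y)}{|y-a|^{t-u}}.
\]
By Lemma~\ref{L:Riesz-1} the first summand produces a $t$-thin exceptional set for every positive threshold; the second produces a $(t-u)$-thin set if $t-u>0$, and is simply the constant $\|\mu_2'\|$ if $t=u$. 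A symmetric computation for $\mu_3=\mu_3'\otimes\delta_b$ gives an $s$-thin set together with either an $(s-u)$-thin set (if $s>u$) or the constant $\|\mu_3'\|$ (if $s=u$).

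Putting the estimates together yields a bound $|a-b|^{s+t-u}P(\mu)(a)\le\Phi(a)+C_0$, where $C_0$ is the (finite) sum of whichever of $\|\mu_2'\|,\|\mu_3'\|$ actually arise, and $\Phi$ is a finite sum of nonnegative functions, each with the property that for every $c>0$ the set where it is $\ge c$ is $s'$-thin at $b$ for some $s'\in(0,d)$. Taking $M:=C_0+1$, every $a\in E$ satisfies $\Phi(a)\ge1$, hence lies in one of finitely many sets each of which is $s'$-thin for some $s'\in(0,d)$ and so, by the remark preceding Lemma~\ref{L:Riesz-4}, has Lebesgue density zero at $b$; a finite union of such sets again has density zero at $b$, so $E$ does. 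I expect the only genuine difficulty to be the bookkeeping — one must watch exactly which of $s-u,t-u$ vanish, according as $u$ equals $s$, $t$, or both — together with the recognition that it is precisely these degeneracies, through the additive constants $\|\mu_2'\|,\|\mu_3'\|$ they force, that prevent the conclusion from holding for arbitrary positive thresholds; when neither vanishes, i.e. $u<\min\{s,t\}$, the same computation gives $C_0=0$ and the result for every $M>0$, recovering Lemma~\ref{L:Riesz-4}.
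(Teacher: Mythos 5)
Your proposal is correct and takes essentially the same route as the paper: it reruns the decomposition and subadditivity argument of Lemma \ref{L:Riesz-4}, substitutes Lemma \ref{L:Riesz-1} for Lemma \ref{L:Riesz-2} when $s-u$ or $t-u$ degenerates to $0$, and then chooses the threshold above the unavoidable constant contribution from the slice measures (your $M=C_0+1$ plays the role of the paper's choice $M\ge 6\|\mu\|$, which empties the degenerate set $E_{22}$). Your write-up is somewhat more systematic about the case bookkeeping than the paper's sketch, but there is no substantive difference in method.
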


\begin{proof} 
It remains only to consider the cases $u=1$, $u=s$ and $u=t$.

We can follow the same proof as in 
Lemma \ref{L:Riesz-4}, with $\delta$ replaced by $M$,
and most of it works for any positive $M$.

The case $u=1$ causes no problem. 

When $u=s$ or $u=t$ and we consider $E_1$, we may
have to apply Lemma \ref{L:Riesz-1} instead
of Lemma \ref{L:Riesz-2}, as the integrand
depends on only one of the variables $x$
and $y$.   

When we come to consider $E_{22}$, and now suppose that
$u=t$, we find that
$$ P_{22}(a) = \frac1{|b-a|^s}\cdot\|\mu\| $$
so 
$$ E_{22} = \left\{
a: \|\mu\| \ge M/6
\right\}.
$$
Thus $E_{22}$ is either empty or $\R^d$, depending
on the value of $M$.  If we take $M=6\|\mu\|$
(or greater), then $E_{22}=\emptyset$, which surely has 
density zero at $b$.
\end{proof}

\section{Dual Spaces and Distributions}
\Label{S:tools-CT}
Recall our standing assumptions from Subsection \ref{SS:conventions}.
In particular, throughout this section we use the facts that
$0<\alpha<1$,  $X$ is a compact subset of $\C$ and $b\in X$.

\subsection{A Construction}
If $\mu$ is any complex measure
on $X\times X$, having no mass on the diagonal,
we define 
\begin{equation}\label{E:2}
L(\mu)(f) :=
 \int_{X\times X} 
\frac{f(z)-f(w)}{|z-w|^\alpha}
\,d\mu(z,w),\ \forall f\in\lip\alpha.
\end{equation}
Then $L(\mu)$ is an element of $(\lip\alpha)^*$,
the dual of $\lip\alpha$, of norm at most $\|\mu\|$.
(This construction is classical.)

The linear map $\mu\mapsto L(\mu)$ is not injective,
because of the antisymmetry in the integrand. If we
define $R(z,w):= (w,z)$, then
$L(R_\sharp\mu) = -L(\mu)$, where $R_\sharp\mu$
is the push-forward measure, defined by
$$ R_\sharp\mu(E) = \mu(R^{-1}(E)),\ \forall E\subset X\times X.$$
We record here, for future use, a consequence
of this remark:

\begin{lemma}\Label{L:avoid}
Let $b\in X$.
Let $\mu$ be a complex measure on $X\times X$,
having no mass on the diagonal. Then
there exists a measure $\mu'$ on $X\times X$ such that
$\mu'$ has no mass on the vertical slice $V=\{b\}\times X$ and
$L(\mu)=L(\mu')$.
\end{lemma}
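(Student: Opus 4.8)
The plan is to exploit the antisymmetry relation $L(R_\sharp\nu)=-L(\nu)$ already observed, together with a decomposition of $\mu$ according to how its mass is distributed relative to the two slices $V=\{b\}\times X$ and $H=X\times\{b\}$, which are interchanged by $R$.

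First I would split $\mu=\mu_0+\mu_V+\mu_H$, where $\mu_V$ is the restriction of $\mu$ to $V\setminus\{(b,b)\}$, $\mu_H$ is the restriction of $\mu$ to $H\setminus\{(b,b)\}$, and $\mu_0$ is the restriction of $\mu$ to the complement of $V\cup H$. This is a genuine decomposition of $\mu$ because the only point of $X\times X$ not accounted for is $(b,b)$, which lies on the diagonal and hence carries no $\mu$-mass. For the same reason none of the three pieces has mass on the diagonal, so each of $L(\mu_0)$, $L(\mu_V)$, $L(\mu_H)$ is defined, and $L(\mu)=L(\mu_0)+L(\mu_V)+L(\mu_H)$ by linearity.

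Next I would set $\mu':=\mu_0+\mu_H-R_\sharp\mu_V$. Since $R$ maps $V\setminus\{(b,b)\}$ bijectively onto $H\setminus\{(b,b)\}$, the measure $R_\sharp\mu_V$ is carried by $H\setminus\{(b,b)\}$, which is disjoint from $V$; the same is true of $\mu_0$ and of $\mu_H$. Hence $|\mu'|(V)=0$, i.e.\ $\mu'$ has no mass on the vertical slice. Moreover $R_\sharp\mu_V$ has no mass at $(b,b)$, since that mass equals $\mu_V(R^{-1}\{(b,b)\})=\mu_V(\{(b,b)\})=0$, so $\mu'$ has no mass on the diagonal and $L(\mu')$ is defined. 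Finally, using $L(R_\sharp\mu_V)=-L(\mu_V)$ we obtain
$$ L(\mu') = L(\mu_0)+L(\mu_H)-L(R_\sharp\mu_V) = L(\mu_0)+L(\mu_H)+L(\mu_V) = L(\mu),$$
as required.

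There is essentially no obstacle here: the only points needing a little care are that each piece of the decomposition fails to charge the diagonal (so that $L$ applies to it) and that the reflected piece $R_\sharp\mu_V$ lands off $V$; both are immediate consequences of the hypothesis that $\mu$ carries no mass on the diagonal, together with the identity $R(V\setminus\{(b,b)\})=H\setminus\{(b,b)\}$.
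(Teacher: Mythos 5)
Your proof is correct and follows essentially the same route as the paper: restrict $\mu$ to the vertical slice, reflect that piece with $R_\sharp$, and use $L(R_\sharp\nu)=-L(\nu)$ to flip the sign back. The only difference is cosmetic — you split the off-$V$ part further into $\mu_0+\mu_H$, whereas the paper just writes $\mu=\mu_1+\mu_2$ with $\mu_2=\mu|_V$ and takes $\mu'=\mu_1-R_\sharp\mu_2$.
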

\begin{proof}
Write $\mu=\mu_1+\mu_2$, where $\mu_2$ is the restriction of
$\mu$ to $V$.  Then $\mu_1$ has no mass on $V$,
and $\mu_2$ has no mass at $(b,b)$.  Take
$\mu'=\mu_1 - R_\sharp\mu_2$. Then since
$R_\sharp\mu_2$ is supported on the horizontal
slice $H=X\times\{b\}$ and has no mass at $(b,b)$,
it has no mass on $V$. Also, $L(\mu')=L(\mu)$.
\end{proof}. 

\subsection{Distributions}
Let
$\mathcal D$ denote the space of test functions
(i.e. $C^\infty$ functions having compact support),
and let $\mathcal D'$ denote its dual, the Schwartz
distribution space.

The restriction of $L(\mu)$ to $\mathcal{D}$
is a distribution. We denote it by the same symbol
$L=L(\mu)$.
This distribution will not, in general,
be representable by integration against 
a locally-integrable function
or a measure. 

Since $\langle\phi,L\rangle$ is unaffected if $\phi$ is
altered away from $X$, it is clear that
$L$ has support in $X$. Thus we can also define
$\langle\phi,L\rangle$ for any function $\phi$
defined and $C^\infty$ on a neighbourhood of $X$
to be 
$\langle\tilde\phi,L\rangle$ where $\tilde\phi$
is any element of $\mathcal E$ (the space of
globally-defined $C^\infty$ functions) that agrees
with $\phi$ near $X$.   For instance, 
$\displaystyle \left\langle \frac1{z-a},L\right\rangle $
makes sense, for $a\not\in X$. Similarly, $\langle f,L\rangle$
makes sense whenever $f$ is defined on some neighbourhood
of $X$ and satisfies a little-lip$\alpha$ condition there. 

\subsection{Cauchy Transforms} 
The Cauchy transform of $\phi\in\mathcal D$
is its convolution
$$ \hat\phi := \phi*\left(
\frac1{\pi z}
\right) $$
with the fundamental solution of $\displaystyle 
\frac{\partial}{\partial\bar z}$. In other words,
$$ \hat\phi(z) = \frac1\pi \int\frac{\phi(\zeta)}{z-\zeta} 
d\mathcal{L}^2(\zeta),$$
for all $z\in\C$.
This function belongs to the space $\mathcal E$,
and satisfies
$$ \frac{\partial \hat\phi}{\partial z} = \phi. $$

For distributions $T$ having compact support, we define
$$ \langle\phi,\hat T\rangle =
-\langle\hat\phi, T\rangle, \forall\phi\in\mathcal D.$$

To analyse the transform in case $T=L(\mu)$,
consider the function
$$ H(a):= H(\mu)(a) := \frac1\pi \int
\frac{z-w}{(z-a)(w-a)|z-w|^\alpha}
\,d\mu(z,w),$$
for $a\in \C$.  This is well-defined whenever
$$ \widetilde{H}(a):= \widetilde{H}(\mu)(a) := \int
\frac{|z-w|^{1-\alpha}}{|z-a|\cdot|w-a|}
\,d|\mu|(z,w)<\infty,$$
which happens almost everywhere with respect to area measure,
and $\tilde H$ is locally-integrable,
as is seen by an application of Fubini's Theorem.
Also $|H(a)|\le \tilde{H}(a)$ for all such $a$.
Another Fubini calculation yields
$$ \langle\phi, \widehat{L(\mu)}\rangle = 
\int_\C \phi\cdot H d\mathcal{L}^2, $$
for all $\phi\in\mathcal D$.  Thus
$H$ represents $\widehat{L}$.  Based on this, 
we usually write $\widehat{L}(a)$ for $H(a)$.
Note that
$$ \widehat{L}(a) = H(a)=\left\langle\frac1{\pi(a-z)},L\right\rangle,$$
whenever $a\not\in X$.

\subsection{Estimates for the Cauchy Transform}
First, an estimate for $\widetilde{H}(\mu)$.
Notice that $\widetilde{H}(\mu)$ depends only on
the total variation measure $|\mu|$.
\begin{lemma}\Label{L:Newton-1}
Let the complex measure $\mu$
have support in $X\times X$ and no mass at the point
$(b,b)$. 
Let
$\delta>0$ be given. Then the set
$$ E:= \left\{
a:
|a-b|^{1+\alpha}\cdot \widetilde{H}(\mu)(a) < \delta
\right\}
$$
has full area density at $b$.
\end{lemma}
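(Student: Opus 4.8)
The plan is to recognise Lemma~\ref{L:Newton-1} as nothing more than the case $d=2$, $s=t=1$, $u=1-\alpha$ of the double-layer estimate already proved in Lemma~\ref{L:Riesz-4}. Observe first that $\widetilde{H}(\mu)$ depends only on the total-variation measure $|\mu|$, which is a \emph{positive} measure with compact support contained in $X\times X$; and since $\mu$ carries no mass at $(b,b)$, neither does $|\mu|$. After relabelling the integration variables $(z,w)$ as $(x,y)$, the kernel $|z-w|^{1-\alpha}/(|z-a|\,|w-a|)$ together with the weight $|a-b|^{1+\alpha}$ is precisely the object appearing in Lemma~\ref{L:Riesz-4}, the exponents matching because $s+t-u = 1+1-(1-\alpha)=1+\alpha$.

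Next I would check the hypotheses of Lemma~\ref{L:Riesz-4}: $s=t=1>0$ is clear; the requirement $0<u<\min\{1,s,t\}$ becomes $0<1-\alpha<1$, which holds precisely because $0<\alpha<1$; and $s+t-u<d$ becomes $1+\alpha<2$, again because $\alpha<1$. Applying the lemma with $\mu$ replaced by $|\mu|$ and with $\epsilon=\delta$, we conclude that the complement
$$ \C\setminus E = \left\{ a : |a-b|^{1+\alpha}\cdot\widetilde{H}(\mu)(a)\ge\delta\right\} $$
has Lebesgue density zero at $b$, i.e. $\mathcal{L}^2\bigl((\C\setminus E)\cap\B(b,r)\bigr)/r^2\to0$ as $r\downarrow0$. (The set $E$ is well defined only up to a set of area zero, since $\widetilde{H}(\mu)$ is merely finite almost everywhere; this does not affect the density.)

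Finally I would restate this in terms of $E$ itself. Since $\mathcal{L}^2(\B(b,r))=\pi r^2$ and $E$ and $\C\setminus E$ partition the plane,
$$ \frac{\mathcal{L}^2(E\cap\B(b,r))}{\pi r^2}
= 1 - \frac1\pi\cdot\frac{\mathcal{L}^2\bigl((\C\setminus E)\cap\B(b,r)\bigr)}{r^2} \to 1
\quad(r\downarrow0), $$
so $E$ has full area density at $b$. I do not anticipate any real obstacle: the substantive content is the estimate of Lemma~\ref{L:Riesz-4} (ultimately resting on the genuinely new Lemma~\ref{L:Riesz-2}), and here we are only harvesting it. The sole points calling for a word of care are the reduction to the positive measure $|\mu|$, so as to meet the positivity hypothesis, and the almost-everywhere finiteness of $\widetilde{H}(\mu)$; one should also note that it is the strict inequality $\alpha>0$ that lets us invoke Lemma~\ref{L:Riesz-4}, with its arbitrary threshold $\epsilon=\delta$, rather than Lemma~\ref{L:Riesz-5}, which would only yield a fixed threshold.
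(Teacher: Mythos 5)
Your proposal is correct and is exactly the paper's argument: the paper's proof of Lemma~\ref{L:Newton-1} is precisely an application of Lemma~\ref{L:Riesz-4} with $d=2$, $\mu$ replaced by $|\mu|$, $s=t=1$ and $u=1-\alpha$, and you have merely spelled out the routine hypothesis checks and the passage from "complement has density zero" to "full area density".
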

\begin{proof}
Just apply Lemma \ref{L:Riesz-4} with $d=2$, $\mu=|\mu|$,
$s=t=1$ and $u=1-\alpha$.
\end{proof}

Next, an estimate for $\widetilde{H}(|z-w|^\alpha\cdot\mu)$
(by $|z-w|^\alpha\cdot\mu$ we mean the measure
obtained by multiplying $\mu$ by the function
$(z,w)\mapsto |z-w|^\alpha$).
\begin{lemma}\Label{L:Newton-2}
Let the complex measure $\mu$
have support in $X\times X$ and no mass at the point
$(b,b)$. 
Let
$\delta>0$ be given. Then there exists a 
constant $M>0$ such that the set
$$ E:= \left\{
a:
|a-b|\cdot \widetilde{H}(|z-w|^\alpha\cdot \mu)(a) < M
\right\}
$$
has full area density at $b$.
\end{lemma}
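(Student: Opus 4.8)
The plan is to reduce the statement to Lemma~\ref{L:Riesz-5}. First I would unwind the definition of $\widetilde H$: since $\widetilde H$ depends only on the total-variation measure, and the total variation of $|z-w|^\alpha\cdot\mu$ is $|z-w|^\alpha\cdot|\mu|$, one has
$$
\widetilde H(|z-w|^\alpha\cdot\mu)(a)
= \int_{X\times X} \frac{|z-w|^{1-\alpha}}{|z-a|\cdot|w-a|}\,|z-w|^\alpha\,d|\mu|(z,w)
= \int_{X\times X} \frac{|z-w|}{|z-a|\cdot|w-a|}\,d|\mu|(z,w),
$$
the exponent $1-\alpha$ built into $\widetilde H$ combining with the extra weight $|z-w|^\alpha$ to give $|z-w|^1$. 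Thus $\widetilde H(|z-w|^\alpha\cdot\mu)$ is exactly a double-layer Riesz potential of the kind treated in Lemmas~\ref{L:Riesz-4} and~\ref{L:Riesz-5}, in dimension $d=2$, with $s=t=1$ and $u=1$.

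Next I would observe that here $u=1=\min\{1,s,t\}$, so we sit precisely on the boundary: the strict inequality $u<\min\{1,s,t\}$ needed for Lemma~\ref{L:Riesz-4} (and hence for a result of the strength of Lemma~\ref{L:Newton-1}, whose proof uses $u=1-\alpha<1$) just fails, but the hypotheses $0<u\le\min\{1,s,t\}$ and $s+t-u=1<2=d$ of Lemma~\ref{L:Riesz-5} do hold. The positive measure $|\mu|$ on $\R^2\times\R^2$ has compact support (contained in $X\times X$) and no mass at $(b,b)$, as that lemma requires. Applying Lemma~\ref{L:Riesz-5} with $\mu$ replaced by $|\mu|$ produces a constant $M>0$ for which the set
$$
\left\{ a\in\C : |a-b|\cdot\int_{X\times X}\frac{|z-w|\,d|\mu|(z,w)}{|z-a|\cdot|w-a|}\ \ge\ M\right\}
$$
has Lebesgue density zero at $b$.

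Finally I would note that the complement in $\C$ of $E$ is contained in the union of this last set and the singleton $\{b\}$ (if $\widetilde H(|z-w|^\alpha\cdot\mu)(a)=+\infty$ then automatically $a$ lies in the displayed set), both of which have Lebesgue density zero at $b$; hence $E$ has full area density at $b$, as claimed. The parameter $\delta$ in the statement is vestigial: in contrast to Lemma~\ref{L:Newton-1}, one cannot force the threshold to be arbitrarily small, which is exactly why only the existence of \emph{some} $M$ is asserted. I do not anticipate any genuine obstacle here --- the substance is entirely contained in Lemma~\ref{L:Riesz-5}, and what remains is the one-line identity for the integrand and the index bookkeeping just described.
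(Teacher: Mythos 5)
Your proposal is correct and follows exactly the paper's route: rewrite $\widetilde H(|z-w|^\alpha\cdot\mu)$ so the weight combines with the built-in $|z-w|^{1-\alpha}$ to give the kernel $\frac{|z-w|}{|z-a|\,|w-a|}$, then invoke Lemma \ref{L:Riesz-5} with $d=2$, $s=t=u=1$ applied to $|\mu|$. Your added remarks (the vestigial $\delta$, the boundary case $u=\min\{1,s,t\}$ forcing only \emph{some} $M$) are accurate but not a departure from the paper's argument.
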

\begin{proof}
Explicitly,
$$ \widetilde{H}(|z-w|^\alpha\cdot \mu)(a) 
=\int_{X\times X}
\frac{|z-w|\ d|\mu|(z,w)}{
|z-a|\cdot|w-a|}
$$
so we use the case
of Lemma \ref{L:Riesz-5} with $d=2$, $\mu=|\mu|$,
$s=t=1$ and $u=1$.
\end{proof}

Next, a uniform estimate for the Cauchy transforms
of all bounded multiples of a fixed measure $\mu$:
\begin{lemma}\Label{L:Cauchy-1}
Let the complex measure $\mu$
have compact support in $\C\times\C$ and no mass at the point
$(b,b)$.  Let $\delta>0$. Then there exists a
set $E$ having full area density at $b$ such that
$$ 
|a-b|^{1+\alpha}\cdot \left|\widehat{L(f\cdot\mu})(a)\right| 
< \delta\cdot \sup|f|
$$
for all $a\in E$, whenever $f:X\times X\to \C$
is a bounded Borel function.
\end{lemma}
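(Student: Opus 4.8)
The plan is to reduce the desired estimate to the pointwise inequality $|\widehat{L(\nu)}(a)|\le\widetilde H(\nu)(a)$ recorded in the construction of the Cauchy transform, together with the observation that $\widetilde H(\nu)$ depends only on the total variation $|\nu|$ and is monotone in it. The uniformity over all bounded Borel $f$ will then come for free from this monotonicity, since the majorant we produce is $f$-independent apart from a scalar $\sup|f|$.

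First I would note that if $f:X\times X\to\C$ is bounded and Borel then $|f\cdot\mu|=|f|\,|\mu|\le(\sup|f|)\,|\mu|$ as measures, so $f\cdot\mu$ again has compact support and no mass at $(b,b)$; hence $L(f\cdot\mu)$, and therefore its Cauchy transform, is defined, and at any point $a$ with $\widetilde H(f\cdot\mu)(a)<\infty$ the representing function satisfies
$$\bigl|\widehat{L(f\cdot\mu)}(a)\bigr|=|H(f\cdot\mu)(a)|\le\widetilde H(f\cdot\mu)(a)\le(\sup|f|)\cdot\widetilde H(\mu)(a),$$
the last step because $\widetilde H$ is an integral against the total variation measure and $|f\cdot\mu|\le(\sup|f|)\,|\mu|$.

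Next I would invoke Lemma \ref{L:Newton-1} — whose proof (via Lemma \ref{L:Riesz-4} with $d=2$, $s=t=1$, $u=1-\alpha$) uses only that $\mu$ has compact support and no mass at $(b,b)$, so it applies equally to measures merely supported in $\C\times\C$ — to conclude that the set
$$E:=\{\,a:\ |a-b|^{1+\alpha}\cdot\widetilde H(\mu)(a)<\delta\,\}$$
has full area density at $b$. On $E$ we have $\widetilde H(\mu)(a)<\infty$, hence $\widetilde H(f\cdot\mu)(a)<\infty$, so the chain of inequalities above is valid there, and for every $a\in E$ and every bounded Borel $f$,
$$|a-b|^{1+\alpha}\cdot\bigl|\widehat{L(f\cdot\mu)}(a)\bigr|\le(\sup|f|)\cdot|a-b|^{1+\alpha}\cdot\widetilde H(\mu)(a)<\delta\cdot\sup|f|,$$
which is exactly the assertion.

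There is no substantial obstacle here beyond bookkeeping; the two points to watch are (i) that the bound $|\widehat{L(f\cdot\mu)}(a)|\le\widetilde H(f\cdot\mu)(a)$ is being used \emph{pointwise} on $E$ rather than merely almost everywhere, which is legitimate precisely because on $E$ the defining integral for $H(f\cdot\mu)(a)$ converges absolutely, so $H(f\cdot\mu)(a)$ is a genuine value there and not just an $L^1_{\mathrm{loc}}$ representative, and (ii) the \emph{uniformity} in $f$, which, as noted, is exactly what the passage to the $f$-independent majorant $\widetilde H(\mu)$ delivers.
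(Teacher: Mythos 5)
Your proof is correct and is essentially the paper's own argument: take the set $E$ from Lemma \ref{L:Newton-1} and use the $f$-independent pointwise bound $\left|\widehat{L(f\cdot\mu)}(a)\right|\le\widetilde{H}(\mu)(a)\cdot\sup|f|$ wherever the right-hand side is finite. The extra remarks you add (that $|f\cdot\mu|\le(\sup|f|)\,|\mu|$, and that $H(f\cdot\mu)(a)$ is a genuine value on $E$ because the defining integral converges absolutely there) just make explicit what the paper leaves implicit.
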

\begin{proof} Take the set $E$ given by Lemma \ref{L:Newton-1}.
Then the desired estimate holds on $E$
since
$$
\left|\widehat{L(f\cdot\mu)}(a)\right|
\le \widetilde{H}(\mu)(a)\cdot\sup|f|
$$
for all $a$ for which the right-hand-side is finite.
\end{proof}
 
\subsection{The Product $g\cdot L$}
The dual of any Banach algebra is naturally
a module over the algebra.  In the present
situation, $\lip\alpha$ acts on $(\lip\alpha)^*$,
so given $g\in\lip\alpha$ and a measure $\mu$
on $X\times X$, 
we may define a new element $g\cdot L(\mu)$ of 
$(\lip\alpha)^*$ by setting
$$ \langle f,g\cdot L\rangle = \langle g\cdot f,L\rangle,
\ \forall f\in\lip\alpha.$$
We remark that $\langle1,g\cdot L\rangle = \langle g,L\rangle\not=0$,
in general, so we cannot represent $g\cdot L$ by a measure as in
Equation \eqref{E:2}.  However, writing
$$  f(z)g(z)- f(w)g(w)=
\left(  f(z)- f(w)\right)\cdot g(z) 
+  f(w)\cdot\left( g(z)-g(w)\right),$$
a short calculation gives
$$ \langle f,g\cdot L\rangle
= \int_{X\times X}
\frac{ f(z)- f(w)}{|z-w|^\alpha}
d\nu(z,w)
+
\int_X f(w)\,d\lambda(w),$$
where $\nu=(g\circ\pi_1)\cdot\mu$ is the measure on 
$X\times X$ such that
$$ \nu(E) = \int_E g(z)d\mu(z,w) $$
whenever $E\subset X\times X$ is a Borel set,
and $\lambda$ is the measure on $X$ such that
$$ \lambda(E) = \int_{E\times X}
\frac{g(z)-g(w)}{|z-w|^\alpha}d\mu(z,w)
$$
whenever $E\subset X$ is Borel, i.e. $\lambda$
is the first-coordinate marginal of the measure
$$  
\frac{g(z)-g(w)}{|z-w|^\alpha}\cdot\mu(z,w)
$$
(a {\em bounded} multiple of $\mu$).
So we may write $g\cdot L(\mu) = S_1+S_2$, where
$S_1=L(\nu)$, and
$S_2$ is represented by the measure $\lambda$ on $X$.

Let us call $S_1$ {\em the main part of } $g\cdot L$
and $S_2$ {\em the residual part of } $g\cdot L$.

We note for future reference that the measures
$\nu$ and $\lambda$ have total-variation measures
dominated by fixed measures depending only
on $\mu$ and on the $\lip\alpha$ norm
$\|g\|=\|g\|_\alpha  = \|g\|'+\sup|g|$
of $g$:  

\begin{lemma}\Label{L:domination}
Let $\mu$ be a measure on $X\times X$ having no mass on
the diagonal. Let $\mu_\sharp$ be the 
first-coordinate
marginal of $|\mu|$. 
Then for any $g\in\lip\alpha$, the 
measures $\nu$ and $\lambda$ representing the 
main and residual parts of $g\cdot L(\mu)$
satisfy\\
(1) $|\nu| \le |g(z)\cdot\mu|$, and
(2)
$|\lambda|\le \|g\|'\cdot|\mu_\sharp|$.
\end{lemma}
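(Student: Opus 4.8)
The plan is to unwind the definitions of $\nu$ and $\lambda$ recorded just before the statement and then to invoke two elementary facts about total-variation measures: that the total variation of a measure with a density $h$ with respect to $\sigma$ is $|h|\cdot|\sigma|$, and that the total variation of a push-forward $f_\sharp\sigma$ is dominated by $f_\sharp|\sigma|$. No machinery beyond this is needed; the lemma is really bookkeeping that isolates the precise domination used later.

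For (1), recall that $\nu=(g\circ\pi_1)\cdot\mu$, i.e. $\nu$ has density $(z,w)\mapsto g(z)$ with respect to $\mu$. Hence $|\nu|=|g(z)|\cdot|\mu|$, which is exactly $|g(z)\cdot\mu|$ (so in fact equality holds, not just the asserted inequality), and (1) is done.

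For (2), recall that $\lambda$ is the first-coordinate marginal — the push-forward under $\pi_1(z,w)=z$ — of
$$ \rho := \frac{g(z)-g(w)}{|z-w|^\alpha}\cdot\mu .$$
First, $\rho$ has density $(z,w)\mapsto(g(z)-g(w))/|z-w|^\alpha$ with respect to $\mu$, so $|\rho|=\dfrac{|g(z)-g(w)|}{|z-w|^\alpha}\,|\mu|$. Next, since $g\in\lip\alpha$ we have the pointwise bound $|g(z)-g(w)|\le\|g\|'\,|z-w|^\alpha$ (this is the definition of the pure seminorm), whence $|\rho|\le\|g\|'\,|\mu|$. Finally, pushing forward under $\pi_1$ and using $|(\pi_1)_\sharp\rho|\le(\pi_1)_\sharp|\rho|$ together with the monotonicity of push-forward gives $|\lambda|\le\|g\|'\,(\pi_1)_\sharp|\mu|=\|g\|'\,\mu_\sharp=\|g\|'\,|\mu_\sharp|$, the last equality because $\mu_\sharp$ is a positive measure. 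This is (2).

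The only point deserving a word of justification is the inequality $|f_\sharp\sigma|\le f_\sharp|\sigma|$ for a complex measure $\sigma$ and a Borel map $f$: for a Borel set $B$ one has $|(f_\sharp\sigma)(B)|=|\sigma(f^{-1}B)|\le|\sigma|(f^{-1}B)=(f_\sharp|\sigma|)(B)$, and taking a supremum over Borel partitions of $B$ yields the claim. I do not anticipate any genuine obstacle: both assertions collapse, via these standard measure-theoretic identities, to the Lipschitz estimate $|g(z)-g(w)|\le\|g\|'\,|z-w|^\alpha$.
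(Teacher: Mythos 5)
Your proof is correct: unwinding the definitions of $\nu$ and $\lambda$, using $|h\cdot\mu|=|h|\cdot|\mu|$, the bound $|g(z)-g(w)|\le\|g\|'\,|z-w|^\alpha$, and $|(\pi_1)_\sharp\rho|\le(\pi_1)_\sharp|\rho|$ is exactly the routine verification intended; the paper itself states the lemma with the proof omitted (marked \qed) as obvious bookkeeping. Nothing is missing, and your extra remark that (1) is actually an equality is accurate.
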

\qed

\subsection{Estimate for the Product}
We now establish a lemma that gives a set having full area 
density on which a uniform estimate holds
for $\widehat{g\cdot L(\mu)}$, provided $g(b)=0$.

\begin{lemma}\Label{L:Cauchy-2}
Let $0<\alpha<1$, and $b\in X$,  and let the complex measure $\mu$
have support in $X\times X$ and no mass on the diagonal. 
Then there exist a constant $K>0$
and a set $E\subset\C$ having full area
density at $b$, such that whenever
$g\in \tilde{A}$, with $g(b)=0$, we have 
$$ 
|a-b|\cdot \left|\widehat{g\cdot L(\mu)}(a)\right| \le
K\cdot\|g\|_\alpha
$$
for all $a\in E$.
\end{lemma}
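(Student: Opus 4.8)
The plan is to use the splitting of $g\cdot L(\mu)$ into its main and residual parts recorded just above: write $g\cdot L(\mu)=L(\nu)+S_2$, where $\nu=(g\circ\pi_1)\cdot\mu$ and $S_2$ is represented by the measure $\lambda$ on $X$ of Lemma~\ref{L:domination}. Since the Cauchy transform is linear on distributions, $\widehat{g\cdot L(\mu)}=\widehat{L(\nu)}+\widehat{S_2}$, and it suffices to bound $|a-b|$ times each summand by a constant multiple of $\|g\|_\alpha$ on a set of full area density at $b$; the intersection of these (in fact finitely many) sets is again of full area density, and serves as the required $E$. Throughout we use that $g\in\lip\alpha$ with $g(b)=0$ gives $|g(z)|=|g(z)-g(b)|\le\|g\|'\,|z-b|^\alpha$, and that $\|g\|'\le\|g\|_\alpha$.

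\textbf{The main part.} Here $\widehat{L(\nu)}=H(\nu)$ with $|H(\nu)(a)|\le\widetilde H(\nu)(a)$, and by Lemma~\ref{L:domination} together with the inequality above, $|\nu|\le\|g\|'\,|z-b|^\alpha\,|\mu|$, so
$$
\widetilde H(\nu)(a)\le\|g\|'\int_{X\times X}\frac{|z-w|^{1-\alpha}\,|z-b|^\alpha}{|z-a|\cdot|w-a|}\,d|\mu|(z,w).
$$
The key manoeuvre is to dispose of the factor $|z-b|^\alpha$ by the elementary inequality $|z-b|^\alpha\le|z-a|^\alpha+|a-b|^\alpha$ (valid since $0<\alpha<1$), which bounds the last integral by $P_1(a)+|a-b|^\alpha\,\widetilde H(\mu)(a)$, where $P_1(a):=\int\frac{|z-w|^{1-\alpha}}{|z-a|^{1-\alpha}|w-a|}\,d|\mu|(z,w)$. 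For the term $|a-b|^\alpha\,\widetilde H(\mu)(a)$, Lemma~\ref{L:Newton-1} supplies a set of full area density at $b$ on which $|a-b|^{1+\alpha}\widetilde H(\mu)(a)<1$. For $P_1$, observe that it is precisely the double-layer potential of Lemma~\ref{L:Riesz-5} with $d=2$, $s=1-\alpha$, $t=1$, $u=1-\alpha$: one has $s+t-u=1<d$ and $u=\min\{1,s,t\}$, the borderline case covered by that lemma, so there are a constant $M_1>0$ and a set of full area density at $b$ on which $|a-b|\,P_1(a)<M_1$. On the intersection, $|a-b|\,|H(\nu)(a)|\le\|g\|'(M_1+1)$.

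\textbf{The residual part.} By Lemma~\ref{L:domination}, $S_2$ is represented by a measure $\lambda$ with $|\lambda|\le\|g\|'\,\mu_\sharp$ (the marginal $\mu_\sharp$ of $|\mu|$ from that lemma), and $\widehat{S_2}$ is represented by the ordinary Cauchy transform $\widehat\lambda(a)=\frac1\pi\int\frac{d\lambda(w)}{a-w}$, so $|\widehat\lambda(a)|\le\frac{\|g\|'}{\pi}\int\frac{d\mu_\sharp(w)}{|a-w|}$. The measure $\mu_\sharp$ may carry an atom at $b$; writing $\mu_\sharp=\mu_\sharp(\{b\})\,\delta_b+\rho$ with $\rho$ having no mass at $b$, the atom contributes at most $\|g\|'\,\|\mu\|/\pi$ to $|a-b|\,|\widehat\lambda(a)|$, while Corollary~\ref{C:Riesz-1} (that is, Lemma~\ref{L:Riesz-1} with $d=2$, $s=1$) applied to $\rho$ supplies a set of full area density at $b$ on which $|a-b|\int\frac{d\rho(w)}{|a-w|}<1$. (Alternatively one may first replace $\mu$ by the measure $\mu'$ of Lemma~\ref{L:avoid}, which has no mass on $\{b\}\times X$ and the same image under $L$, hence gives the same $g\cdot L$; then the relevant marginal is already atomless at $b$.) Thus on a set of full area density at $b$ we have $|a-b|\,|\widehat\lambda(a)|\le\|g\|'(\|\mu\|+1)/\pi$.

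\textbf{Conclusion and main difficulty.} Taking $E$ to be the intersection of the finitely many sets of full area density produced above (restricted, as usual, to the full-measure set on which the potentials involved are finite), we obtain for $a\in E$
$$
|a-b|\,\bigl|\widehat{g\cdot L(\mu)}(a)\bigr|\le|a-b|\,|H(\nu)(a)|+|a-b|\,|\widehat\lambda(a)|\le K\,\|g\|_\alpha,\qquad K:=M_1+1+\tfrac{\|\mu\|+1}{\pi},
$$
which is the assertion. The step I expect to be the main obstacle is the treatment of $P_1$: the factor $|z-b|^\alpha$ forced on us by $g(b)=0$ pushes the double-layer potential into the limiting regime $u=s$, where the clean Wiener-series estimates of Lemmas~\ref{L:Riesz-3}--\ref{L:Riesz-4} fail and only the weaker Lemma~\ref{L:Riesz-5} is available. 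This is exactly why the present lemma provides a fixed constant $K$ rather than the arbitrarily small bound obtained in Lemma~\ref{L:Cauchy-1}; a subsidiary but unavoidable point is the possible atom of $|\mu|$ at $b$ in the residual term, which must be separated off and estimated directly (or removed at the outset via Lemma~\ref{L:avoid}).
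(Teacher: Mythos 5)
Your proof is correct, and its skeleton is the paper's: split $g\cdot L(\mu)$ into main and residual parts via Lemma \ref{L:domination}, use $g(b)=0$ to dominate $|\nu|$ by $\|g\|'\,|z-b|^\alpha|\mu|$, split $|z-b|^\alpha\le|z-a|^\alpha+|a-b|^\alpha$, control the $|a-b|^\alpha\widetilde H(\mu)$ piece by Lemma \ref{L:Newton-1}, and control the residual part by the single-layer potential of the first marginal via Lemma \ref{L:Riesz-1}/Corollary \ref{C:Riesz-1}. You diverge in two local choices, both legitimate and worth noting. First, the paper removes the possible atom of $\mu_\sharp$ at $b$ in advance by Lemma \ref{L:avoid}, whereas you split the atom off and observe that its contribution to $|a-b|\,|\widehat\lambda(a)|$ is just a constant $\le\|g\|'\|\mu\|/\pi$; either works, and you mention the paper's route as an alternative. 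Second, for the leftover piece of the main part the paper goes through Lemma \ref{L:Newton-2}, i.e.\ Lemma \ref{L:Riesz-5} with $s=t=u=1$ applied to $\widetilde H(|z-w|^\alpha\cdot\mu)$, while you apply Lemma \ref{L:Riesz-5} directly with $(s,t,u)=(1-\alpha,1,1-\alpha)$ to the potential $P_1(a)=\int|z-w|^{1-\alpha}\,|z-a|^{-(1-\alpha)}|w-a|^{-1}\,d|\mu|$ that the splitting actually produces. Your choice of exponents matches that term exactly, whereas the paper's displayed estimate requires comparing $\widetilde H(|z-a|^\alpha\cdot|\mu|)$ with the $|z-w|^\alpha$-weighted potential, which is not a pointwise domination (the paper's write-up there also contains the typo $\widetilde H(|z-b|^\alpha\cdot|\mu|)$); so your version is a self-contained and arguably cleaner instantiation of the same idea, correctly using the borderline case $u=s$ of Lemma \ref{L:Riesz-5}, which is precisely why the conclusion is a fixed constant $K$ rather than an arbitrarily small bound, as you observe.
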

\begin{proof} 
By Lemma \ref{L:avoid}, we may assume that $\mu$ has no mass
on the vertical slice $V=\{b\}\times X$, 
since we can if necessary replace it,
without altering $L(\mu)$,
by another measure $\mu'$ having no mass on $V$. 

Also, by Corollary \ref{C:Riesz-1} (taking $\delta=1$), 
there is a 
set $E_1$ having full density at $b$, such that
$$
|a-b|\cdot \int_{\R^d}
\frac{d|\mu_\sharp|(z)}{|z-a|} \le 1
$$
for all $a\in E_1$, where $\mu_\sharp$ is the usual
marginal (since $\mu_\sharp$ has no mass at $b$).

Applying Lemma \ref{L:Newton-1} with $\delta=1$, 
there is a set $E_2$
having full density at $b$, such that
$$ 
|a-b|^{1+\alpha}\cdot \left|\widehat{H}(\mu)(a)\right| 
\le 1
$$
for all $a\in E_2$.  

By Lemma \ref{L:Newton-2} there is a constant $M>0$
such that 
$$ 
|a-b|\cdot \left|\widetilde{H}(|z-w|^\alpha\cdot\mu)(a)\right| 
\le M
$$
for all $a\in E_3$, a set having full density at $b$.

Take $E=E_1\cap E_2\cap E_3$.  Then $E$ has full density at $b$.

Fix any $g\in\lip\alpha$, with $g(b)=0$ and $\|g\|\le1$,
and take $a\in E$.
Let $S_1=L(\nu)$ and $S_2=\lambda$ be the main and residual
parts of $g\cdot L(\mu)$. 
By Lemma \ref{L:domination}, $|\nu|\le|g(z)\cdot\mu|\le
|z-b|^\alpha\cdot|\mu|
\le 
|z-a|^\alpha\cdot|\mu|+
|a-b|^\alpha\cdot|\mu|
$, 
and $|\lambda|\le\mu_\sharp$, and we have
$$ \widehat{g\cdot L(\mu)} = 
\widehat{L(\nu)} + 
\widehat{\lambda},
$$
so for all $a\in E$
$$ \begin{array}{rcl}
&& 
\left| \widehat{g\cdot L(\mu)}(a) \right|
\\
&\le&
\dsty \widetilde{H}(\nu)(a)  + 
\int_\C \frac{d|\lambda|(z)}{|z-a|}
\\
&\le&
\dsty \widetilde{H}(|z-b|^\alpha\cdot|\mu|)(a) + 
|a-b|^\alpha\cdot\widetilde{H}(\mu)(a)
+
\int \frac{d|\mu_\sharp|(z)}{|z-a|}
\end{array}
$$
so
$$
|a-b|\cdot\left| \widehat{g\cdot L(\mu)}(a) \right|
\le M+2.
$$
Taking $K=M+2$, we are done.
\end{proof}

\section{Proof of Theorem}\Label{S:proof}

Suppose $A=A_\alpha(U)$ admits a nonzero
continuous point derivation at $b\in X=\bdy(U)$, and let
$\partial$ be the normalized derivation at that point.

There is a complex measure $\mu$ on $X\times X$
having no mass on the diagonal, such that
$ \partial f = L(\mu)(f)$ whenever
$f\in \tilde{A}$. 
(We express this by saying 
that the distribution $T_1:= L(\mu)$ is continuous
on $\lip\alpha$ and
{\em represents} the derivation on $\tilde{A}$.)
This fact was explained
in our previous paper
\cite[pp.~135-6, or p.~6 in the ArXiv copy]{bsaf}.

\smallskip
By Lemmas \ref{L:Cauchy-1} and \ref{L:Cauchy-2} 
there exist a constant
$K>0$ and a set $E$ having full area density at
$b$, such that for all $a\in E$ we have
$$
\begin{array}{rcl}
 |a-b|^{1+\alpha}\cdot\left|\widehat{T_1}(a)\right| 
&\le& 1,\\
|a-b|\cdot \widehat{g\cdot T_1}(a) 
&\le& K.
\end{array}
$$

Since $U$ has full density at $b$,
we may (and we do) assume $E\subset U$, by taking the intersection
if need be.

Defining distributions (and elements of $\lip\alpha^*$)
$T_0 := (z-b)\cdot T_1$, and 
$T := -\pi(z-b)^2\cdot T_1$, 
one sees that 
$T_0(f) = f(b)$ and $T(f)=0$ for all $f\in\mathcal{A}$,
and hence for all $f\in\tilde{A}$ by continuity.
In other words, $T_0$ {\em represents evaluation at $b$}
on $\tilde{A}$
and $T=-\pi(z-b)T_0$ {\em annihilates} $\tilde{A}$.

All these distributions have compact support (on $X$)
and so have Cauchy transforms, which are holomorphic
when restricted to $U$.  

Next (see \cite{bsaf} for details),
$$ \widehat{T_0} = (z-b)\cdot \widehat{T_1}, $$
and
$$ \widehat{T} = 1 - \pi(z-b)^2\cdot\widehat{T_1}. $$
Thus, for $a\in E$,
we have 
$$ \left| \widehat{T}(a) - 1\right|
\le  |a-b|^{1-\alpha}.
$$
(Note that the exponent $1-\alpha$ was mistakenly
entered as $\alpha$ in \cite[Equation (3), page 142]{bsaf}.
The ArXiv version is corrected.)
So, replacing $E$, if need be, by its intersection
with the open ball of radius $1$ about $b$,
 we may assume that $\widehat{T}(a)\not=0$ on $E$.

Then whenever $a\in E$ we may form the distribution 
$$R_a := \frac1{\pi\widehat{T}(a)(a-z)}\cdot T,$$
Then $R_a$ represents evaluation at $a$ on $A$, and the functional
$$ f\mapsto \frac{f(a)-f(b)}{a-b} - \partial f $$
is represented on $A$ by the distribution
$$
D_a = \frac{R_a-T_0}{a-b} - T_1.
$$ 
Then  $D_{a}f \to 0$ for all $f\in \mathcal{A}$ as
$a\to b$, with $a\in E$.
To prove the theorem, we have to show that
this also holds for all $f$ in the closure $\tilde{A}$ of $\mathcal A$.
To do this, it suffices to show that
the functionals $D_a$ are uniformly
bounded on $\tilde{A}$, for $a\in E$, i.e
that
$$ |D_a(f)|  \le c\|f\|_\alpha $$
for some constant $c>0$, for all $f\in A$ and all $a\in E$.

Fix an arbitrary $f\in\lip\alpha$, holomorphic on $U$,
with $\|f\|_\alpha\le1$.
Take $g(z)=f(z)-f(b)$, so $D_a(f)=D_a(g)$, $\|f\|'=\|g\|'$ and 
$g(b)=0$, so that $|g(a)|\le|a-b|^\alpha$.  
A calculation shows that
$$ D_a(g) = (a-b)\cdot\widehat{g\cdot T_1}(a) 
+ \pi(a-b)g(a)\widehat{T_1}(a).
$$
Thus, for $a\in E$, we have
$$ |D_a(g)| \le |a-b|\cdot|\widehat{g\cdot T_1}(a)| 
+ \pi|a-b|^{1+\alpha}\cdot|\widehat{T_1}(a)|\le K+\pi.
$$
Thus $D_a(f)$ is indeed bounded, as required.
This concludes the proof. 

\section{Remarks}
\subsection{}
The set $E$ constructed in the proof has
\lq\lq more than" full area density.  To be precise,
the complement of $E$
is a finite union of exceptional sets $E'$ for which one of the 
conditions
\begin{itemize}
\item
$\sum_n 2^nC_1(A_n(b)\cap E') <+\infty$,\\
\item
$\sum_n 2^{\alpha n}C_\alpha(A_n(b)\cap E') <+\infty$, or\\
\item
$\sum_n 2^{(1+\alpha)n}C_{1+\alpha}(A_n(b)\cap E') <+\infty$
\end{itemize}
holds. The weakest of these conditions is the latter, so we can say
that 
$$
\sum_n 2^{(1+\alpha)n}C_{1+\alpha}(A_n(b)\setminus E) <+\infty.
$$
The convergence of this Wiener-type series implies that
the $(1+\alpha)$-dimensional density of $C_{1+\alpha}$ capacity
$$ \lim_{r\downarrow0}
\frac{C_{1+\alpha}(\mathbb{B}(b,r)\setminus E)}{r^{1+\alpha}}=0,$$
and, \textit{a fortiori}, the $\beta$-dimensional density
of $\beta$-dimensional Hausdorff content
$$ \lim_{r\downarrow0}
\frac{M^{\beta}(\mathbb{B}(b,r)\setminus E)}{r^{\beta}}=0$$
for $1+\alpha<\beta\le2$.

\subsection{} It would be interesting to know whether
the conditions of Corollary \ref{C:main} are equivalent
to the existence of a sequence $a_n\to b$ along which
$f'(a_n)$ is bounded, for each $f\in A_\alpha(U)$.

\subsection{} I am grateful to David Malone and Oliver Mason,
and to the referees
for helpful comment on this paper.

\end{document}